\def\ep{{\varepsilon}}
\def\R{\mathbb R}
\newcommand{\norm}[2]{\lvert\lvert {#1} \rvert \rvert_{{#2}}}
\newtheorem{theorem}{\textbf{Theorem}}[section]
\newtheorem{lemma}[theorem]{\textbf{Lemma}}
\newtheorem{proposition}[theorem]{\textbf{Proposition}}
\newtheorem{corollary}[theorem]{\textbf{Corollary}}
\newtheorem{remark}[theorem]{\textbf{Remark}}
\newtheorem{example}[theorem]{\textbf{Example}}
\numberwithin{equation}{section}
\title{\bf Adaptation  in shifting and size-changing environments under selection}
\author{ Matthieu Alfaro, Adel Blouza and Nessim Dhaouadi\vspace{5pt}\\
Univ. Rouen Normandie, CNRS, LMRS UMR 6085\\ F-76000 Rouen, France.}
\date{}
\begin{document}
\maketitle
\vspace{10pt}
\begin{abstract} 
We propose a model to characterize how a diffusing population adapts under  a time periodic selection, while its  environment undergoes shifts and size changes, leading to significant differences with classical results on fixed domains. After studying the underlying periodic parabolic principal eigenelements, we address the {\it extinction vs. persistence}  issue, taking into account the interplay between the moving habitat and periodic selection. Subsequently, we employ a space-time finite element approach, establish the well-posedness of the approximation scheme, and conduct numerical simulations to explore these dynamics.

\vspace{20pt}
\noindent{Key Words: dynamics of adaptation, long time behavior, periodic parabolic eigenelements, finite elements approximation, numerical simulation.}\\

\noindent{AMS Subject Classifications:  35K57 (Reaction-diffusion equations), 35P15 (Estimates of eigenvalues in context of PDEs), 65M06 (Finite difference methods for initial value and initial-boundary value problems involving PDEs).}
\end{abstract}
 
\section{Introduction}\label{s:intro}

In this work, we consider the solutions $u=u(t,x)$ to the boundary value problem
\begin{equation}\label{eq}
\begin{cases}
u_t=du_{xx}+\left(r-\frac{\alpha(t)}{2}\left(x-x_{opt}(t)\right)^2\right)u,  & \quad t>0,\, A(t)<x<A(t)+L(t),\vspace{5pt}\\
u(t,A(t))=u(t,A(t)+L(t))=0, & \quad t>0,
\end{cases}
\end{equation}
where $d>0$, $r>0$,
\begin{equation}\label{x-opt}
x_{opt}(t):=A(t)+\beta(t) L(t),
\end{equation}
and, obviously, we use the shortcuts $u_t$, $u_{xx}$ for $\partial_tu$, $\partial_{xx}u$. One of the originalities  of \eqref{eq} stands in the fact that it is posed on an interval that may not only shift, through the function $A\in C^2([0,+\infty);\R)$, but also change size, through the function $L\in C^2([0,+\infty);(0,+\infty))$. Both functions $\alpha$ and $\beta$ are assumed to be Hölder continuous and $T$-periodic for some given $T>0$, and $\alpha$ is positive. The boundary conditions at $x=A(t)$, $x=A(t)+L(t)$ are of the Dirichlet type. We aim at exploring the long time behavior of the solutions to \eqref{eq}. 

\medskip

In ecology or population dynamics, this serves as a linear model to describe the adaptation of a diffusing population, whose mobility is measured by the constant $d>0$, living in a moving and size changing habitat. The reasons for such moving range boundaries, see \cite{All-1}, could be the consequences of flooding, forest fire, etc. This also connects to the issue of ecological  niches shifted by some external factors, such as {\it Global Warming}, and which has  received a lot of attention, 
see  \cite{Pot-Lew-04}, \cite{Roq-Roq-Ber-Kre-08}, \cite{Ber-Die-Nag-Zeg-09}, \cite{Bou-Nad-15}, \cite{Alf-Ber-Rao-17}, \cite{Bou-Gil-19} and the references therein. Furthermore, the growth depends on the location in the environment: the maximal growth rate $r>0$ is reached at the time dependent position \eqref{x-opt}, that turns out to be outside the  domain when $\beta(t)\not \in (0,1)$, and, away from this optimal position, the growth decays quadratically  with a pressure measured by the function $\alpha$. The periodicity of functions $\alpha$ and $\beta$ may reflect, see \cite{All-3},  some seasonal variations in temperature, water level, etc.

On the other hand, in the context of evolutionary biology,  diffusion then models the mutation process, where $x$ denotes a phenotypic trait (see \cite{Kim-65}, \cite{Lan-75}, \cite{Bur-00-book}  among many other references). In this framework, the fitness  (reproductive success) of a phenotype $x$ is described by a function that decreases away from the optimum and, here, we use  Fisher's geometrical phenotype-to-fitness model, see \cite{Ten-14}, \cite{Mar-Len-15}, \cite{Alf-Car-17}, \cite{Ham-Lav-Mar-Roq-20}, \cite{Ham-Lav-Roq-21}, \cite{Alf-Ham-Pat-Roq-23} and the references therein. Precisely, the fitness function admits a unique maximum and decreases quadratically away from it. Furthermore, while the phenotypic space changes through functions $A$ and $L$,  the location of the optimum $x=x_{opt}(t)$ also moves through the periodic function $\beta$, and the  intensity of selection varies through the periodic function $\alpha$. This connects to the issue of moving optima studied in \cite{Roq-Pat-Bon-Mar-20}, \cite{Lav-23}, among others, while the conjugate effects of periodic moving optimum and intensity of selection were analyzed in \cite{Fig-Mir-18, Fig-Mir-21}.

Hence model \eqref{eq} can be considered in both frameworks of ecology and evolutionary biology and, in the following, we may alternatively refer to habitat, spatial dispersal, growth or to phenotypic space, mutation, selection.

\medskip

In a series of very recent works, Allwright \cite{All-1, All-2, All-3} has addressed the issue  of reaction-diffusion problems posed on shifting and/or changing size domains, revealing sharp differences with the case of fixed domains. Among other things (such as the role of competition and/or the dimension) she proved that, when $\alpha\equiv 0$,  it may happen that the population survives although the habitat is always strictly smaller than the critical one, a phenomenon only possible in presence of a moving habitat.

Nevertheless, in these works, all individuals are assumed to be identical ($\alpha\equiv 0$) and selection is thus ignored. 
The purpose of this work is to take  into account this phenomenon ($\alpha > 0$) 
in the context of a moving phenotypic space and to investigate how the interplay of periodic selection and shifting/expanding domains affect population dynamics. To do so, we rely on both an analytical and a numerical approach. 

\medskip

The paper is organized as follows. In Section \ref{s:fixed-domain}, 
we consider the case of a fixed domain, and provide estimates on the underlying periodic principal eigenvalue, revealing contrasted outcomes depending on 
the fluctuations of the location of the optimum and of the intensity of selection. In Section \ref{s:sur-vs-ext}, we consider the moving habitat case and study the  {\it extinction vs. survival} issue for solutions to \eqref{eq}. To this end, we first transform the problem to one posed on  a fixed domain and construct appropriate sub- and super-solutions, whose long-time behavior can be analyzed using insights from  Section \ref{s:fixed-domain}. Finally, Section \ref{s:numerics} is dedicated to the numerical investigation of problem \eqref{eq}. To this end, we adopt the well-established space-time finite element method originally introduced in \cite{Hughes1982} and later extended, for example, in \cite{Moore_2018}, for the approximation of parabolic evolution problems on moving spatial domains. To improve stability, the method uses  a time-upwind test function, following the Streamline Upwind Petrov-Galerkin (SUPG) approach. We formulate this scheme to our specific setting, namely problem \eqref{eq}. Due to the lack of boundary regularity in our case, the underlying functional  framework is more intricate than the one considered in  \cite{Moore_2018}. We then establish the  well-posedness of the resulting discrete problem  and validate the method against an exact analytical solution in a simplified static setting. Finally, we  apply this approach  to perform simulations exploring different types of domain evolutions and revealing key insights into population survival/extinction dynamics, as discussed in detail in Section \ref{s:sur-vs-ext}. 

\section{The case of a fixed domain}\label{s:fixed-domain}

In this section, we  consider \eqref{eq} in the special case $A(t)=0$ (no shift of the domain), $L(t)=L$ (constant size of the domain), that is
\begin{equation}\label{eq-fix}
\begin{cases}
u_t=du_{xx}+\left(r-\frac{\alpha(t)}{2}\left(x-\beta(t) L\right)^2\right)u,  & \quad t>0,\, 0<x<L,\vspace{5pt}\\
u(t,0)=u(t,L)=0, & \quad t>0,
\end{cases}
\end{equation}
where we recall that both $\alpha$ and $\beta$ are Hölder continuous and $T$-periodic for some $T>0$.

We denote $\lambda=\lambda(\alpha,\beta)$ the principal eigenvalue, $\varphi=\varphi(t,x)$ the principal eigenfunction solving the periodic parabolic eigenproblem
\begin{equation}\label{pb-spectral}
\begin{cases}
\varphi _t -d\varphi_{xx}-\left(r-\frac{\alpha(t)}{2}\left(x-\beta(t)L\right)^2\right)\varphi=\lambda \varphi,  & \quad t\in \R, \, 0<x<L,\vspace{5pt}\\
\varphi(t,0)=\varphi(t,L)=0, &\quad t\in \R, \vspace{5pt}\\
\varphi>0, & \quad  t\in \R, \, 0<x<L, \vspace{5pt} \\
\varphi(t,x)=\varphi(t+T,x), & \quad  t\in \R, \, 0<x<L,
\end{cases}
\end{equation}
where $\varphi$ is normalized  by
\begin{equation}
    \label{normalization}
\Vert \varphi \Vert _{L^\infty(\R \times (0,L))}=1.
\end{equation}
The existence, uniqueness of such a principal eigenpair and the regularity of $\varphi$ are classical, see Theorem \ref{th:Cas-Laz-82} quoted from \cite{Cas-Laz-82}. For further details on such eigenelements, see \cite{book-Hes-91}, \cite{Can-Cos-96}, \cite{Hut-She-Vic-01}, \cite{Nad-09}, \cite{Liu-Lou-Pen-Zho-19}, \cite{Liu-Lou-Pen-Zho-21} and the references therein.

In the sequel, for a $T$-periodic function $t\mapsto R(t)$, we denote $\langle R\rangle$ its mean value that is
$$
\langle R \rangle:=\frac 1 T \int_0^T R(s)\,ds.
$$

It is well known that the sign of the principal eigenvalue of \eqref{pb-spectral} decides between survival  (when $\lambda\leq 0$) from extinction (when $\lambda>0$) in problem \eqref{eq-fix}. Hence the following estimate for $\lambda$ is crucial and is the main result of this section.

\begin{theorem}[Bounds for the eigenvalue]\label{th:bounds-eigenvalue} Let $L>0$. Let $t\mapsto \alpha(t)>0$, $t\mapsto \beta(t)$, be both Hölder continuous and $T$-periodic. Then,
 the principal eigenvalue $\lambda$ defined in \eqref{pb-spectral} enjoys the bounds
\begin{equation}\label{vp_estimate_final}
\frac{d\pi^2}{L^2}-r+\langle R^- \rangle  < \lambda < \frac{d\pi^2}{L^2}-r+L^2\frac{\langle \alpha \rangle}{2}\left(\frac{2\pi^2-3}{6\pi^2}+\frac{\langle\alpha\beta^2\rangle}{\langle\alpha\rangle}-\frac{\langle\alpha\beta\rangle}{\langle\alpha\rangle} \right),
\end{equation}
where
\begin{equation}\label{def:R-moins}
R^-(t):= \min_{0 \leq x \leq L}\left(\frac{\alpha(t)}{2}\left(x-\beta(t)L\right)^2\right).
\end{equation}
\end{theorem}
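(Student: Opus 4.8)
The plan is to prove the two inequalities in \eqref{vp_estimate_final} separately, exploiting two classical features of the periodic parabolic principal eigenvalue of \eqref{pb-spectral}, both available from \cite{book-Hes-91,Cas-Laz-82}: first, its \emph{monotonicity} with respect to the zeroth order coefficient (if one potential dominates another pointwise then the associated eigenvalue is not larger, strictly so when the domination is strict on a set of positive measure); and second, its comparison with the principal Dirichlet eigenvalue $\lambda_E$ of the \emph{time-averaged} elliptic operator $-d\partial_{xx}-\langle c\rangle(\cdot)$ on $(0,L)$, where I write $c(t,x):=r-\tfrac{\alpha(t)}{2}(x-\beta(t)L)^2$ and $\langle c\rangle(x)=r-\tfrac12\langle\alpha(\cdot)(x-\beta(\cdot)L)^2\rangle$.

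For the lower bound I would start from the pointwise inequality $c(t,x)\le r-R^-(t)=:\bar c(t)$, in which, by the very definition \eqref{def:R-moins}, the right-hand side no longer depends on $x$. Monotonicity then gives $\lambda\ge\lambda(\bar c)$, strictly since $c(t,\cdot)<\bar c(t)$ away from the at most two minimizing abscissae. For the $x$-independent potential $\bar c(t)$ the eigenproblem \eqref{pb-spectral} decouples: looking for $\varphi(t,x)=\rho(t)\sin(\pi x/L)$ reduces it to the scalar ODE $\rho'=(\lambda-d\pi^2/L^2+\bar c)\rho$, whose only positive $T$-periodic solution forces $\int_0^T(\lambda-d\pi^2/L^2+\bar c)\,dt=0$, that is $\lambda(\bar c)=d\pi^2/L^2-\langle\bar c\rangle=d\pi^2/L^2-r+\langle R^-\rangle$. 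This gives the left inequality.

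For the upper bound I would first invoke the comparison $\lambda\le\lambda_E$ and then bound $\lambda_E$ by the elliptic Rayleigh quotient tested against $\sin(\pi x/L)$, which is admissible but not the eigenfunction of the averaged problem (whence strictness):
\[
\lambda_E<\frac{\int_0^L\!\big(d(\tfrac{\pi}{L}\cos\tfrac{\pi x}{L})^2-\langle c\rangle(x)\sin^2\tfrac{\pi x}{L}\big)\,dx}{\int_0^L\sin^2\tfrac{\pi x}{L}\,dx}.
\]
The gradient term equals $d\pi^2/L^2$. For the potential term I would expand $\langle c\rangle(x)=r-\tfrac12(\langle\alpha\rangle x^2-2L\langle\alpha\beta\rangle x+L^2\langle\alpha\beta^2\rangle)$ and use the $\sin^2$-weighted normalized moments $\langle x\rangle=L/2$ and $\langle x^2\rangle=L^2\tfrac{2\pi^2-3}{6\pi^2}$ (the constant $\tfrac{2\pi^2-3}{6\pi^2}$ is precisely $2\int_0^1 y^2\sin^2(\pi y)\,dy$); substituting reproduces the right-hand side of \eqref{vp_estimate_final} verbatim.

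The crux — and the step I expect to be the main obstacle — is justifying the comparison $\lambda\le\lambda_E$. If not simply cited from \cite{book-Hes-91}, I would prove it through the logarithmic substitution $v:=\ln\varphi$, which turns \eqref{pb-spectral} into $v_t-dv_{xx}-dv_x^2-c=\lambda$; averaging in time kills $\langle v_t\rangle$ by periodicity, and with $V:=\langle v\rangle$, $\Phi:=e^V$ one gets $-d\Phi_{xx}-\langle c\rangle\Phi=(\lambda+d(\langle v_x^2\rangle-V_x^2))\Phi\ge\lambda\Phi$ by Jensen's inequality $\langle v_x^2\rangle\ge\langle v_x\rangle^2=V_x^2$. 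Thus $\Phi>0$ is a Dirichlet supersolution at level $\lambda$, which forces $\lambda_E\ge\lambda$ by the supersolution characterization of the elliptic principal eigenvalue. The delicate part is the behavior of $v=\ln\varphi$ near the lateral boundary where $\varphi$ vanishes: one must justify the time-averaging and the limiting condition $\Phi\to0$, resting on the Hopf-type estimate $\varphi(t,x)\asymp\mathrm{dist}(x,\partial(0,L))$ guaranteed by Theorem \ref{th:Cas-Laz-82}. Carrying the strictness through (already secured in the lower bound, and here because $\sin(\pi x/L)$ is not the averaged eigenfunction) then completes \eqref{vp_estimate_final}.
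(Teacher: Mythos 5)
Your proposal is correct and follows essentially the same route as the paper: the lower bound via monotonicity of the eigenvalue in the growth term combined with the explicit separated solution $\rho(t)\sin(\pi x/L)$ for an $x$-independent potential, and the upper bound via the comparison $\lambda<\hat\lambda$ with the time-averaged elliptic problem (cited from \cite{Hut-She-Vic-01}) followed by the Rayleigh quotient tested with $\sin(\pi x/L)$, the paper's intermediate completion-of-the-square reduction to \eqref{pb-spectral_mean2} being purely cosmetic. The only real difference is that where the paper simply cites \cite{Hut-She-Vic-01} for the averaging comparison, you additionally sketch a self-contained proof via the logarithmic substitution and Jensen's inequality, which is the standard argument and a sound fallback.
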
 

\subsection{Bounds for the eigenvalue, proof of Theorem \ref{th:bounds-eigenvalue}}\label{ss:eigenvalue}

We begin with the following classical result: if the growth term is independent of $x$, the principal eigenelements can be explicitly determined. More precisely, the following statement holds.

\begin{lemma}\label{lem:no-x} If $t\mapsto R(t)$ is $T$-periodic and Hölder continuous, the principal eigenvalue corresponding to problem $u_t=du_{xx}+R(t)u$ (with zero Dirichlet boundary conditions as above) is nothing else than
$$
\frac{d\pi ^2}{L^2}-\langle R\rangle.
$$
\end{lemma}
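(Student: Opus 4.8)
The plan is to prove Lemma~\ref{lem:no-x} by exploiting the fact that when the growth term $R(t)$ depends only on $t$, the periodic parabolic eigenproblem \emph{separates} into a spatial part and a temporal part. First I would write the eigenproblem in the form $\varphi_t - d\varphi_{xx} - R(t)\varphi = \lambda\varphi$ on $(0,L)$ with Dirichlet conditions and $T$-periodicity in $t$. The natural ansatz is $\varphi(t,x) = \psi(t)\sin\!\left(\frac{\pi x}{L}\right)$, since $\sin(\pi x/L)$ is the positive principal Dirichlet eigenfunction of $-d\,\partial_{xx}$ on $(0,L)$, with eigenvalue $d\pi^2/L^2$. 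This is the only spatial mode that stays positive on the whole interval, which matches the required sign condition $\varphi>0$.

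Substituting this ansatz, the spatial derivatives produce the factor $d\pi^2/L^2$, and after dividing through by $\sin(\pi x/L)$ the problem collapses to the scalar linear ODE
\begin{equation*}
\psi'(t) + \left(\frac{d\pi^2}{L^2} - R(t) - \lambda\right)\psi(t) = 0,
\end{equation*}
which I solve explicitly as $\psi(t) = \psi(0)\exp\!\left(\int_0^t\big(\lambda + R(s) - \frac{d\pi^2}{L^2}\big)\,ds\right)$. The key step is now to impose $T$-periodicity of $\psi$ (equivalently of $\varphi$): requiring $\psi(T)=\psi(0)$ with $\psi\not\equiv 0$ forces the exponent over one period to vanish, i.e.
\begin{equation*}
\int_0^T\left(\lambda + R(s) - \frac{d\pi^2}{L^2}\right)ds = 0,
\end{equation*}
which rearranges to $\lambda = \frac{d\pi^2}{L^2} - \langle R\rangle$ using the definition of the mean value $\langle R\rangle$. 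One checks that the resulting $\psi(t)$ is strictly positive and $C^1$, so $\varphi(t,x)=\psi(t)\sin(\pi x/L)$ is a genuine positive $T$-periodic eigenfunction, exhibiting the claimed eigenpair.

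Finally I would invoke the uniqueness of the principal eigenpair (guaranteed by the Casten--Lazer type result, Theorem~\ref{th:Cas-Laz-82}) to conclude that the value $\frac{d\pi^2}{L^2}-\langle R\rangle$ we have exhibited is indeed \emph{the} principal eigenvalue, not merely \emph{an} eigenvalue. I do not expect a serious obstacle here, since the separation of variables is clean when $R$ is $x$-independent; the only point requiring mild care is verifying that the constructed $\varphi$ meets all the defining properties of the principal eigenfunction (positivity, regularity inherited from the Hölder continuity of $R$, and periodicity) so that the uniqueness theorem applies and identifies our candidate as principal. The Hölder continuity of $R$ ensures $\psi\in C^1$ and hence the requisite parabolic regularity of $\varphi$.
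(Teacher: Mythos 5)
Your proposal is correct and follows essentially the same route as the paper: the separation ansatz $\psi(t)\sin(\pi x/L)$, reduction to the scalar ODE $\psi' = \bigl(\lambda + R(t) - \frac{d\pi^2}{L^2}\bigr)\psi$, and the observation that nontrivial $T$-periodic solutions exist exactly when the coefficient has zero mean. The only difference is cosmetic — you make explicit the final appeal to uniqueness of the principal eigenpair (via positivity of the constructed eigenfunction and Theorem \ref{th:Cas-Laz-82}), which the paper leaves implicit.
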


\begin{proof} Denote $\phi(x):=\sin\left( \frac \pi L x\right)$ solving $-d\phi''=\frac{d\pi^2}{L^2}\phi$, $\phi(0)=\phi(L)=0$, $\phi>0$ on $(0,L)$. Plugging the ansatz $\phi(x)f(t)$ into the eigenvalue problem we are left to
$$
f'=\left(\lambda-\frac{d\pi^2}{L^2}+R(t)\right)f,
$$ 
whose nontrivial solutions are $T$-periodic if and only if $\lambda-\frac{d\pi^2}{L^2}+R(t)$ has zero mean, which gives the result.\end{proof}

Next, the principal eigenvalue is decreasing with respect to the growth term, see  \cite[Lemma 15.5]{book-Hes-91}. Precisely, the following holds. 

\begin{lemma}\label{lem:comp} For $i=1,2$, let $R_i\in \mathcal C^{\frac \nu 2, \nu}(\R \times [0,L])$ be $T$-periodic in time. Denote $\lambda_i$ the principal eigenvalue corresponding to problem $u_t=du_{xx}+R_i(t,x)u$ (with zero Dirichlet boundary conditions as above). Then
$$
R_1\leq R_2, R_1\not \equiv R_2 \Longrightarrow \lambda_2 <\lambda _1.
$$
\end{lemma}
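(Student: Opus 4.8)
The natural strategy is the classical adjoint-eigenfunction argument for principal eigenvalues of periodic-parabolic operators. The plan is to test the equation for one principal eigenfunction against the principal eigenfunction of the \emph{adjoint} of the other problem, so that both the second-order term and the time-derivative term disappear after integration by parts, leaving the eigenvalue gap $\lambda_1-\lambda_2$ expressed as a sign-definite integral of $R_2-R_1$.

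First I would introduce, besides the principal eigenpairs $(\lambda_i,\varphi_i)$ supplied by the existence theory (\cite{Cas-Laz-82}, \cite{book-Hes-91}), the principal eigenfunction $\psi$ of the adjoint problem associated with $R_1$, that is the positive $T$-periodic solution of
\[
-\psi_t-d\psi_{xx}-R_1(t,x)\psi=\lambda_1\psi
\]
with homogeneous Dirichlet boundary conditions. The key structural input I would invoke is that this adjoint periodic-parabolic operator shares the \emph{same} principal eigenvalue $\lambda_1$ and admits a principal eigenfunction $\psi>0$: applying the same Krein--Rutman-type theory to the adjoint operator, whose period (Poincaré) map is the adjoint of a compact, strongly positive operator, yields a positive adjoint eigenfunction associated with the same spectral radius, hence the same $\lambda_1$. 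Parabolic Schauder theory, applicable since $R_1,R_2\in\mathcal C^{\frac\nu2,\nu}$, guarantees that $\varphi_2$ and $\psi$ are regular enough to justify the integrations by parts below.

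Next, writing $Q:=(0,T)\times(0,L)$, I would multiply the equation for $\varphi_2$ by $\psi$ and integrate over $Q$. Using $T$-periodicity of $\psi$ and $\varphi_2$, the time boundary terms at $t=0$ and $t=T$ cancel, so $\int_Q\psi\,\varphi_{2,t}=-\int_Q\psi_t\,\varphi_2$; using the Dirichlet conditions (both functions vanish at $x=0,L$), integrating by parts twice in $x$ gives $\int_Q\psi\,\varphi_{2,xx}=\int_Q\psi_{xx}\,\varphi_2$. Substituting the adjoint identity $-\psi_t-d\psi_{xx}=\lambda_1\psi+R_1\psi$ then produces
\[
(\lambda_1-\lambda_2)\int_Q\psi\,\varphi_2=\int_Q(R_2-R_1)\,\psi\,\varphi_2.
\]

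Finally I would conclude by sign analysis. Since $\varphi_2>0$ and $\psi>0$ on the open cylinder, $\int_Q\psi\,\varphi_2>0$; and since $R_2-R_1\geq0$ is continuous and, by the hypothesis $R_1\not\equiv R_2$, strictly positive on a nonempty open subset of $Q$, the right-hand side is strictly positive. Hence $\lambda_1-\lambda_2>0$, which is precisely $\lambda_2<\lambda_1$. The main obstacle, and the only genuinely nontrivial ingredient, is the existence and strict positivity of the adjoint principal eigenfunction together with the coincidence of the direct and adjoint principal eigenvalues; I would settle this point by appealing to the principal-eigenvalue theory for periodic-parabolic operators, after which the integration-by-parts identity and the positivity argument are routine.
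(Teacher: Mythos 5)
Your proposal is correct. Note, however, that the paper does not actually prove this lemma: it is quoted verbatim from the literature, with a pointer to \cite[Lemma 15.5]{book-Hes-91}, so there is no internal argument to compare against. What you have written is the classical duality proof that underlies such results: introduce the positive $T$-periodic principal eigenfunction $\psi$ of the adjoint problem for $R_1$ (same eigenvalue $\lambda_1$), test the equation for $\varphi_2$ against $\psi$, and use periodicity in $t$ and the Dirichlet conditions in $x$ to kill all boundary terms, yielding
\begin{equation*}
(\lambda_1-\lambda_2)\int_Q \psi\,\varphi_2\,dx\,dt=\int_Q (R_2-R_1)\,\psi\,\varphi_2\,dx\,dt>0,
\end{equation*}
where strict positivity of the right-hand side follows since $R_2-R_1\geq 0$ is continuous and not identically zero, hence positive on a relatively open set that necessarily meets the open cylinder where $\psi\varphi_2>0$. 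Your identification of the genuinely nontrivial ingredient is also accurate: the existence and strict positivity of the adjoint principal eigenfunction, together with the coincidence of the direct and adjoint principal eigenvalues, is exactly the point one must borrow from the periodic-parabolic Krein--Rutman theory (it is established in Hess's monograph alongside the cited lemma; your sketch via the adjoint of the period map is slightly informal, since that adjoint acts on a dual space and one must work in an $L^p$ duality framework to recover a positive \emph{function} solving the backward problem, but this is standard). For completeness, the alternative route---arguably closer to the proof behind the citation---is to compare the period (Poincar\'e) maps directly: $R_1\leq R_2$, $R_1\not\equiv R_2$ forces a strict ordering of the evolution operators at time $T$ by the strong maximum principle, and monotonicity of the spectral radius for compact strongly positive operators then gives $r_1<r_2$, hence $\lambda_2<\lambda_1$ via $\lambda_i=-\frac{1}{T}\log r_i$. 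That route avoids the adjoint eigenfunction but invokes strong positivity of the period map instead; the two arguments are of comparable depth, and either is a legitimate replacement for the paper's citation.
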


From Lemma \ref{lem:no-x} and Lemma \ref{lem:comp}, we immediately infer the following bounds on the principal periodic parabolic eigenvalue. 

\begin{proposition}[First Bounds for the eigenvalue]\label{prop-estimate-vp}
The principal eigenvalue $\lambda$ defined in \eqref{pb-spectral} satisfies
\begin{equation} \label{estimate-vp}
\frac{d\pi^2}{L^2}-r+\langle R^- \rangle  < \lambda < \frac{d\pi^2}{L^2}-r+\langle R^+ \rangle,
\end{equation}
where
\begin{equation}\label{def:R}
R^-(t):= \min_{0 \leq x \leq L}\left(\frac{\alpha(t)}{2}\left(x-\beta(t)L\right)^2\right), \quad R^+(t):=\max_{0 \leq x \leq L}\left(\frac{\alpha(t)}{2}\left(x-\beta(t)L\right)^2\right).
\end{equation}
\end{proposition}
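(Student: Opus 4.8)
The plan is to sandwich the space-dependent growth term of \eqref{pb-spectral} between two purely time-dependent functions, and then to combine the explicit eigenvalue formula of Lemma \ref{lem:no-x} with the strict monotonicity of Lemma \ref{lem:comp}.

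First, directly from the definitions \eqref{def:R} of $R^-$ and $R^+$, for every $t$ and every $x\in[0,L]$ one has
$$
R^-(t)\leq \frac{\alpha(t)}{2}\big(x-\beta(t)L\big)^2\leq R^+(t),
$$
so that, writing $R(t,x):=r-\frac{\alpha(t)}{2}(x-\beta(t)L)^2$ for the growth term of \eqref{pb-spectral},
$$
r-R^+(t)\leq R(t,x)\leq r-R^-(t).
$$
Since $\alpha$ and $\beta$ are Hölder continuous and $T$-periodic, so are $R^\pm$ (each is a maximum/minimum over $x\in[0,L]$ of a parabola, hence attained at an endpoint or at the vertex), and $R$, $r-R^\pm$ all lie in the required parabolic Hölder class.

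Next, I would apply Lemma \ref{lem:no-x} to the two $x$-independent growth terms $t\mapsto r-R^+(t)$ and $t\mapsto r-R^-(t)$: recalling $\langle r-R^\pm\rangle=r-\langle R^\pm\rangle$, their principal eigenvalues are respectively $\frac{d\pi^2}{L^2}-r+\langle R^+\rangle$ and $\frac{d\pi^2}{L^2}-r+\langle R^-\rangle$. It then remains to invoke Lemma \ref{lem:comp}. Applying it with $R_1=r-R^+$ and $R_2=R$ gives $\lambda<\frac{d\pi^2}{L^2}-r+\langle R^+\rangle$, while applying it with $R_1=R$ and $R_2=r-R^-$ gives $\frac{d\pi^2}{L^2}-r+\langle R^-\rangle<\lambda$; together these are exactly \eqref{estimate-vp}.

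The one point deserving care — and where the standing hypothesis $\alpha>0$ is crucial — is the verification of the condition $R_1\not\equiv R_2$ needed for the \emph{strict} conclusion of Lemma \ref{lem:comp}. Because $\alpha(t)>0$ for all $t$, the map $x\mapsto\frac{\alpha(t)}{2}(x-\beta(t)L)^2$ is a nondegenerate parabola, hence nonconstant on $[0,L]$; consequently $R^-(t)<R^+(t)$ for every $t$ and each sandwich inequality is strict on a set of $(t,x)$ of positive measure. This excludes the equality case and legitimizes both strict inequalities, completing the argument.
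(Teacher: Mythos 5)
Your proof is correct and takes essentially the same route as the paper: the paper's entire proof of Proposition \ref{prop-estimate-vp} is the remark that the bounds follow immediately from Lemma \ref{lem:no-x} and Lemma \ref{lem:comp}, which is exactly the sandwich-plus-comparison argument you spell out. Your explicit verification of the strictness condition $R_1\not\equiv R_2$ via $\alpha>0$ is a detail the paper leaves implicit, and you handle it correctly.
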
 

The upper bound in \eqref{estimate-vp} is obtained by first maximizing the reaction term in space and, next, using the mean value in time thanks to the above lemmas. It turns out that taking the mean value in time first, which we do below, provides a better upper estimate.

 In \cite{Hut-She-Vic-01} Hutson, Shen and Vickers  proved that the principal eigenvalue of a periodic parabolic problem in the form of \eqref{pb-spectral} is  smaller than that of the corresponding elliptic problem obtained by taking the mean value of the growth rate. In other words, populations are more likely to persist in a fluctuating environment than in one with a constant averaged growth rate. Specifically, in our context \cite[Theorem 2.1]{Hut-She-Vic-01} establishes that $\lambda<\hat \lambda$, where $\hat \lambda$ is the  principal eigenvalue of the elliptic problem derived by rate averaging: 
\begin{equation}\label{pb-spectral_mean}
\begin{cases}
-d\varPsi_{xx}-\langle r-\frac{\alpha(t)}{2}\left(x-\beta(t)L\right)^2\rangle\varPsi=\hat{\lambda} \varPsi,  & \quad 0<x<L,\vspace{5pt}\\
\varPsi(0)=\varPsi(L)=0,\vspace{5pt} \\
\varPsi>0, & \quad   0<x<L.
\end{cases}
\end{equation}
Straightforward computations yield the more convenient equivalent problem
\begin{equation}\label{pb-spectral_mean2}
\begin{cases}
-d\varPsi_{xx}+ \frac{\langle \alpha\rangle}{2}\left(x-\frac{\langle\alpha\beta\rangle}{\langle\alpha\rangle} L\right)^2\varPsi=\hat\mu \varPsi,  & \quad  0<x<L,\vspace{5pt}\\
\varPsi(0)=\varPsi(L)=0, &\quad  \vspace{5pt}\\
\varPsi>0, & \quad   0<x<L, 
\end{cases}
\end{equation}
where
\begin{equation}\label{def_mu}
\hat\mu=\hat{\lambda}+r+\frac{L^2}{2}\left(\frac{\langle\alpha\beta\rangle^2}{\langle\alpha\rangle}-\langle \alpha\beta^2\rangle\right).
\end{equation}
Since this new eigenvalue problem is associated with a self-adjoint operator, we are equipped with  the variational formulation
\begin{equation}\label{Rayleigh}
\hat\mu=\inf_{u\in H^1_0(0,L), \lVert u \rVert_{L^2}=1} Q_d(u), \quad Q_d(u):=d\int_0^L  u_x^2 \,dx+  \frac{\langle\alpha\rangle}{2}\int_0^L \left(x-\frac{\langle\alpha\beta\rangle}{\langle\alpha\rangle} L\right)^2 u^2 \,dx.
\end{equation} 
From this characterization, we can draw out some properties of $\hat\mu$. 

\begin{lemma} Let $L>0$. Let $t\mapsto \alpha(t)>0$, $t\mapsto \beta(t)$, be both Hölder continuous and $T$-periodic. Then the  principal eigenvalue $\hat \mu$ enjoys the following properties. 
\begin{enumerate}
\item[(i)] The function $d\mapsto\hat\mu=\hat\mu(d)$ is increasing and concave on $(0,+\infty)$, and
\begin{equation*}
\lim\limits_{d \to 0} \hat\mu=L^2\frac{\langle \alpha \rangle}{2}\min_{0\leq x \leq 1 }\left(x-\frac{\langle\alpha\beta\rangle}{\langle\alpha\rangle}\right)^2 \qquad \text{and}\qquad \lim\limits_{d \to+\infty} \hat\mu=+\infty.
\end{equation*}
\item[(ii)]In addition, for any $d>0$, we have 
\begin{equation}\label{estimate_mu}
 \frac{d\pi^2}{L^2}< \hat\mu < \frac{d\pi^2}{L^2}+L^2\frac{\langle \alpha \rangle}{2}\left(\frac{2\pi^2-3}{6\pi^2} +\frac{\langle\alpha\beta\rangle}{\langle\alpha\rangle}\left(\frac{\langle\alpha\beta\rangle}{\langle\alpha\rangle}-1\right)\right).
\end{equation}
\end{enumerate}
\end{lemma}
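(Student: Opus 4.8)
The plan is to run everything through the variational characterization \eqref{Rayleigh}, exploiting one structural fact: for each fixed admissible $u$ (that is, $u\in H^1_0(0,L)$ with $\lVert u\rVert_{L^2}=1$) the map $d\mapsto Q_d(u)$ is \emph{affine}, namely $Q_d(u)=a(u)\,d+b(u)$ with
$$a(u):=\int_0^L u_x^2\,dx\ge 0,\qquad b(u):=\frac{\langle\alpha\rangle}{2}\int_0^L\Big(x-\tfrac{\langle\alpha\beta\rangle}{\langle\alpha\rangle}L\Big)^2u^2\,dx\ge 0.$$
Since $\hat\mu(d)=\inf_u Q_d(u)$ is a pointwise infimum of affine functions of $d$, it is automatically concave; and since every slope $a(u)$ is nonnegative, each $Q_\cdot(u)$ is nondecreasing, hence so is the infimum. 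This settles the qualitative claims in (i) in one stroke. For \emph{strict} monotonicity I would bring in the minimizer: by standard spectral theory the infimum in \eqref{Rayleigh} is attained at the unique positive principal eigenfunction $\varPsi_d$, and $a(\varPsi_d)>0$ because a nonzero $H^1_0$ function cannot have vanishing gradient; comparing $\hat\mu(d_1)\le Q_{d_1}(\varPsi_{d_2})<Q_{d_2}(\varPsi_{d_2})=\hat\mu(d_2)$ for $d_1<d_2$ then forces strict increase.

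For the limits in (i), the bound $\hat\mu(d)\ge d\pi^2/L^2$ (established in (ii) below) gives $\hat\mu\to+\infty$ as $d\to+\infty$ at once. The limit $d\to0$ is the delicate point. From $\hat\mu(d)=\inf_u[a(u)d+b(u)]\ge\inf_u b(u)$ for every $d>0$, together with monotonicity, the limit exists and is $\ge\inf_u b(u)$. For the reverse inequality I would fix a near-minimizer $u_\epsilon\in H^1_0$ of $b$ with $b(u_\epsilon)\le\inf_u b(u)+\epsilon$ and let $d\to0$ in $\hat\mu(d)\le a(u_\epsilon)d+b(u_\epsilon)$, obtaining $\limsup_{d\to0}\hat\mu(d)\le\inf_u b(u)+\epsilon$. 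Writing $\gamma:=\langle\alpha\beta\rangle/\langle\alpha\rangle$, the pointwise estimate $b(u)\ge\frac{\langle\alpha\rangle}{2}\min_{[0,L]}(x-\gamma L)^2$ is saturated by concentrating the mass of $u^2$ near the point of $[0,L]$ realizing this minimum, and the rescaling $x=Ly$ identifies $\inf_u b(u)=L^2\frac{\langle\alpha\rangle}{2}\min_{0\le x\le1}(x-\gamma)^2$, the claimed value.

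The lower bound in (ii) reuses the minimizer: $\hat\mu=a(\varPsi_d)d+b(\varPsi_d)$, where $a(\varPsi_d)\ge\pi^2/L^2$ by the sharp Dirichlet Poincaré inequality on $(0,L)$ and $b(\varPsi_d)>0$, the integrand being positive a.e.\ since $\alpha>0$, $\varPsi_d>0$ on $(0,L)$, and $(x-\gamma L)^2$ vanishes at most at one point; hence $\hat\mu>d\pi^2/L^2$. For the upper bound I would feed the normalized first Dirichlet mode $u(x)=\sqrt{2/L}\,\sin(\pi x/L)$ into \eqref{Rayleigh}. Then $a(u)=\pi^2/L^2$, and after the substitution $x=Ly$ the potential term reduces to the elementary integral $\int_0^1(y-\gamma)^2\sin^2(\pi y)\,dy=\frac{2\pi^2-3}{12\pi^2}+\frac12\gamma(\gamma-1)$, so that $Q_d(u)$ equals \emph{exactly} the right-hand side of \eqref{estimate_mu}. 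The inequality is strict because $u$ solves $-du''=\frac{d\pi^2}{L^2}u$ and therefore cannot satisfy the Euler--Lagrange equation of \eqref{pb-spectral_mean2} (its nonconstant potential $\frac{\langle\alpha\rangle}{2}(x-\gamma L)^2$ rules this out), so $u$ is not the minimizer and $\hat\mu<Q_d(u)$.

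The hard part will be the $d\to0$ limit: unlike every other assertion it cannot be read off the minimizer, since $\inf_u b(u)$ is \emph{not} attained in $H^1_0(0,L)$, the infimizing sequence concentrating and leaving the space. Making this rigorous requires writing down explicit concentrating test functions together with a short case distinction according to whether the vertex $\gamma L$ lies inside $[0,L]$ (so the concentration point is $\gamma L$ and $\inf_u b(u)=0$) or outside it (so the concentration point is an endpoint $0$ or $L$, the bump being placed just inside the relevant endpoint).
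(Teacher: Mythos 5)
Your proposal is correct and follows essentially the same route as the paper: the Rayleigh characterization \eqref{Rayleigh}, concavity and monotonicity from the affine dependence $d\mapsto Q_d(u)$, concentrating test functions for the $d\to 0$ limit, the trivial lower bound $d\pi^2/L^2$, and the normalized sine mode $\sqrt{2/L}\sin(\pi x/L)$ for the upper bound in \eqref{estimate_mu} (your value $\int_0^1(y-\gamma)^2\sin^2(\pi y)\,dy=\frac{2\pi^2-3}{12\pi^2}+\frac12\gamma(\gamma-1)$ is exactly what the paper's ``straightforward computations'' yield). The only notable difference is your decoupled treatment of the $d\to0$ limit — first showing $\lim_{d\to 0}\hat\mu(d)=\inf_u b(u)$ via monotonicity and fixed near-minimizers, then identifying that infimum by concentration — whereas the paper couples the two steps through $d$-dependent test functions $\Phi_d(x)=d^{-1/8}\Phi\bigl((x-\sigma L)/d^{1/4}\bigr)$ and dominated convergence; both rest on the same concentration idea, and your explicit strictness arguments (via attainment of the infimum at the principal eigenfunction) are valid refinements of points the paper leaves implicit.
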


\begin{proof} In $(i)$, the fact that the function $d\mapsto \hat \mu(d)$ is increasing follows from the Rayleigh formula \eqref{Rayleigh} and  is classical, see e.g. \cite{Hut-95}. Also, the map $d\mapsto Q_d(u)$ being linear (hence concave) in $(0,+\infty)$ for each $u\in H_0^1(0,L)$, the map $d\mapsto \hat \mu (d)$ is concave in $(0,+\infty)$ and therefore continuous. Also, from \eqref{Rayleigh} we get:
\begin{equation*}
\hat\mu> \inf_{u\in H^1_0(0,1)\, , \, \lVert u \rVert_{L^2}=1} d\int_0^L  u_x^2 \,dx\,=\,\frac{d\pi^2}{L^2}.
\end{equation*} 

Next, for the behavior as $d\to 0$, we may use similar arguments as those in  \cite{Alf-Ver-18}. We consider $\Phi$ a smooth and nonnegative function on $\R$, compactly supported in $(-1,1)
$, and normalized by $\lVert \Phi \rVert_{L^2(\R)}=1$. Let $\sigma\in(0,1)$ be given. Choose $d>0$ small enough so that
$$
\frac{d^{\frac{1}{4}}}{L}\leq\sigma\leq 1- \frac{d^{\frac{1}{4}}}{L}.
$$
Consequently, the function  $\Phi_d(x):=\frac{1}{d^{\frac{1}{8}}}\Phi\left(\frac{x-\sigma L}{d^{\frac{1}{4}}}\right)$ belongs to $H_0^1(0,L)$ and remains $L^2$ normalized. From \eqref{Rayleigh}, we thus get
\begin{equation*}
L^2 \frac{\langle\alpha\rangle}{2}\min_{0\leq x \leq 1 }\left(x-\frac{\langle\alpha\beta\rangle}{\langle\alpha\rangle}\right)^2\leq\hat\mu \leq d\int_0^L  (\Phi_d^{\prime}(x))^2 \,dx+\frac{\langle\alpha\rangle}{2}\int_0^L \left(x-\frac{\langle\alpha\beta\rangle}{\langle\alpha\rangle} L\right)^2 \Phi_d^2(x) \,dx.
\end{equation*}  
Then, by expressing $\Phi_d$ and by using the change of variable $y=\frac{x-\sigma L}{d^{\frac{1}{4}}}$,
\begin{equation*}
\hat\mu \leq \sqrt{d}\lVert  \Phi^{\prime} \rVert_{L^2(\R)}+ \frac{\langle \alpha\rangle}{2}\int_{\R} \left(yd^{\frac{1}{4}}+\left(\sigma-\frac{\langle\alpha\beta\rangle}{\langle\alpha\rangle}\right) L\right)^2 \Phi^2(y) \,dy.
\end{equation*} 
We let $d\to 0$ and deduce from the dominated convergence theorem  that
\begin{equation*}
L^2 \frac{\langle\alpha\rangle}{2} \min_{0\leq x \leq 1 }\left(x-\frac{\langle\alpha\beta\rangle}{\langle\alpha\rangle}\right)^2\leq \lim\limits_{d \to 0} \hat\mu\leq L^2 \frac{\langle\alpha\rangle}{2} \left(\sigma-\frac{\langle\alpha\beta\rangle}{\langle\alpha\rangle}\right)^2.
\end{equation*}
This being true for any $0<\sigma <1$ we get $\lim\limits_{d \to 0} \hat\mu=L^2 \frac{\langle\alpha\rangle}{2} \min_{0\leq x \leq 1 }\left(x-\frac{\langle\alpha\beta\rangle}{\langle\alpha\rangle}\right)^2$.

Last the upper estimate in \eqref{estimate_mu} comes from testing $Q_d$ with the normalized eigenfunction of the Laplacian Dirichlet, namely $u(x)=\sqrt{\frac{2}{L}}\sin\left( \frac{\pi}{L}x\right)$, and very straightforward computations.
\end{proof}

As a result,  \eqref{estimate_mu} and  \eqref{def_mu} imply the improved upper estimate
\begin{equation}\label{truc}
\lambda < \frac{d\pi^2}{L^2}-r+L^2\frac{\langle \alpha \rangle}{2}\left(\frac{2\pi^2-3}{6\pi^2}+\frac{\langle\alpha\beta^2\rangle}{\langle\alpha\rangle}-\frac{\langle\alpha\beta\rangle}{\langle\alpha\rangle} \right),
\end{equation}
which, combined with Proposition \ref{prop-estimate-vp} completes the proof of 
\eqref{vp_estimate_final} and of Theorem \ref{th:bounds-eigenvalue}. \qed

\begin{remark} We claimed that the upper bound in \eqref{truc} is better than the one in \eqref{estimate-vp}. To see this, observe that the maximum defining $R^+(t)$ in \eqref{def:R} is reached at $x=0$ when $\beta(t)\geq \frac 12$ and is equal to $\frac{\alpha(t)}2 \beta^2(t)L^2$, and is reached at $x=L$ when $\beta(t)<\frac 12$ and is equal to $\frac{\alpha(t)}{2}(1-\beta(t))^2L^2$. In other words
\begin{equation}\label{def:R+2}
R^+(t)=\frac{\alpha(t)}{2}\left(\frac 12+\vert \frac 12-\beta(t)\vert\right)^2L^2,
\end{equation}
and what we have to check is 
\begin{equation}\label{goal}
\frac{2\pi^2-3}{6\pi^2} \langle \alpha \rangle +\langle\alpha\beta^2\rangle-\langle\alpha\beta\rangle\leq \left\langle \alpha \left(\frac 12+\vert \frac 12-\beta\vert\right)^2 \right\rangle .
\end{equation}
Then, defining
$A:=\{ t\in (0,T):\beta(t)>\frac{1}{2} \}$,  $B:=\{ t\in (0,T): \beta(t)\leq \frac{1}{2} \}$, and decomposing all integrals over $A$ and $B$, it is straightforward to check that \eqref{goal} is recast
\begin{equation*}
\int_{A}\alpha(t)\left(\frac{2\pi^2-3}{6\pi^2}-\beta(t)\right)\, dt \leq \int_{B} \alpha(t)\left(1-\beta(t)-\frac{2\pi^2-3}{6\pi^2}\right)\, dt,
\end{equation*}
which is obviously true since the left hand side is negative while the right hand side is positive.
\end{remark}

\subsection{Sign of the eigenvalue}\label{ss:sign}

First, in absence of selection ($\alpha\equiv  0$), it is well known that too small domains lead to extinction and, by comparison,  the same holds for \eqref{eq-fix}. Precisely, if $0<L\leq \sqrt{\frac{d\pi^2}{r}}$ it follows from \eqref{vp_estimate_final} that $\lambda>0$.

Furthermore, due to the effects of selection, infinitely expanding the domain size may also result in extinction,  contrasting with the behavior observed in the homogeneous case.

\begin{corollary}[Extinction criterion]\label{cor:ext} Assume that, for all $t \geq 0$, $\beta(t)\notin (0,1)$. Assume either that 
\begin{equation*}
r^2\leq 2d\pi^2 \left \langle \alpha\left(\frac 12 -\vert \frac 1 2-\beta\vert\right)^2\right\rangle=:\delta^-,
\end{equation*}
or that
\begin{equation*}
r^2> \delta^-, \text{ and } \left(
L\leq \sqrt{2d\pi^2\frac{r-\sqrt{r^2-\delta^-}}{\delta^-}}\quad \text{or}\quad L\geq \sqrt{2d\pi^2\frac{r+\sqrt{r^2-\delta^-}}{\delta^-}}\right).
\end{equation*}
Then $\lambda>0$.
\end{corollary}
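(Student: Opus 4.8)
The plan is to rely solely on the lower bound furnished by Theorem~\ref{th:bounds-eigenvalue}, namely $\lambda > \frac{d\pi^2}{L^2} - r + \langle R^-\rangle$, and to show that under the stated hypotheses this lower bound is already nonnegative. The first step is to make $\langle R^-\rangle$ explicit. Since $\beta(t)\notin(0,1)$ for every $t$, the point $\beta(t)L$ lies outside the open interval $(0,L)$, so the parabola $x\mapsto(x-\beta(t)L)^2$ is monotone on $[0,L]$ and its minimum over $[0,L]$ is attained at an endpoint. A computation entirely parallel to the one leading to \eqref{def:R+2} (with the sign in front of the absolute value reversed) then gives $R^-(t) = \frac{\alpha(t)}{2}\bigl(\frac12 - \vert\frac12-\beta(t)\vert\bigr)^2 L^2$, and taking mean values yields $\langle R^-\rangle = \frac{L^2}{4d\pi^2}\,\delta^-$.

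Substituting, it then suffices to prove that $h(L^2)\ge r$, where I set $h(s):=\frac{d\pi^2}{s} + \frac{\delta^-}{4d\pi^2}\,s$ for $s>0$; indeed this gives $\lambda > h(L^2) - r \ge 0$. For $\delta^->0$ the map $h$ is strictly convex on $(0,+\infty)$, tends to $+\infty$ at both ends, and by the arithmetic--geometric mean inequality attains its global minimum $\min_{s>0} h = \sqrt{\delta^-}$ at $s^\star = 2d\pi^2/\sqrt{\delta^-}$. The degenerate case $\delta^-=0$ never occurs under either hypothesis, since $r>0$ makes $r^2\le\delta^-$ impossible then.

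I would then conclude by a case split matching the statement. If $r^2\le\delta^-$, i.e.\ $r\le\sqrt{\delta^-}$, then $h(s)\ge\sqrt{\delta^-}\ge r$ for all $s>0$, so the lower bound is nonnegative for every $L>0$ and $\lambda>0$. If instead $r^2>\delta^-$, the minimum value $\sqrt{\delta^-}$ lies strictly below $r$, and solving $h(s)=r$ --- equivalently the quadratic $\frac{\delta^-}{4d\pi^2}s^2 - r s + d\pi^2 = 0$ --- produces the two positive roots $s_\pm = \frac{2d\pi^2}{\delta^-}\bigl(r\pm\sqrt{r^2-\delta^-}\bigr)$. By convexity of $h$, the inequality $h(s)\ge r$ holds precisely for $s\in(0,s_-]\cup[s_+,+\infty)$; writing $s=L^2$ and taking square roots recovers exactly the two thresholds $L\le\sqrt{s_-}$ and $L\ge\sqrt{s_+}$ appearing in the statement, which finishes the proof.

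The one genuinely delicate point --- and the place where the hypothesis $\beta(t)\notin(0,1)$ is indispensable --- is the identification of $R^-(t)$ as the endpoint value: for $\beta(t)\in(0,1)$ the minimum of the parabola over $[0,L]$ would be $0$ rather than the endpoint expression, the mean $\langle R^-\rangle$ would lose its coercive dependence on $L$, and the whole estimate would break down. Once this pointwise minimum is correctly pinned down for all $t$, the remainder is an elementary convexity-and-quadratic argument, so I expect no further difficulty.
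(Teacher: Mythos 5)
Your proof is correct and follows essentially the same route as the paper: it uses the lower bound of Theorem \ref{th:bounds-eigenvalue}, identifies $R^-(t)=\frac{\alpha(t)}{2}\bigl(\frac12-\vert\frac12-\beta(t)\vert\bigr)^2L^2$ from the hypothesis $\beta(t)\notin(0,1)$, and reduces to the sign of $\frac{d\pi^2}{L^2}-r+\frac{L^2\delta^-}{4d\pi^2}$. The only difference is that you spell out, via convexity and the roots of the associated quadratic in $L^2$, what the paper dismisses as ``a straightforward consequence,'' and you add the harmless observation about the degenerate case $\delta^-=0$.
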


\begin{proof}
The minimum defining $R^-(t)$ in \eqref{def:R-moins} is reached at $x=0$ when $\beta(t)\leq 0$ and is equal to $\frac{\alpha(t)}2 \beta^2(t)L^2$, and is reached at $x=L$ when $\beta(t)\geq 1$ and is equal to $\frac{\alpha(t)}{2}(1-\beta(t))^2L^2$. In other words, when $\beta(t)\not\in(0,1)$, 
\begin{equation}\label{def:R-2}
R^-(t)=\frac{\alpha(t)}{2}\left(\frac 12-\vert \frac 12-\beta(t)\vert\right)^2L^2,
\end{equation}
and it follows from \eqref{vp_estimate_final} that
$$
\lambda >\frac{d\pi^2}{L^2}-r+\frac{L^2}{2}\frac{\delta^-}{2d\pi^2} ,
$$
from which the result is a straightforward consequence.
\end{proof}

Actually, the reason for the above counter-intuitive phenomenon  is that, as $L$ increases, the optimum located at $x=\beta(t)L$ is getting further away from the domain when $\beta(t)\not\in (0,1)$. 

Next, regardless of the location of the optimum, provided that the growth rate is sufficiently large, there exists an interval of domain sizes, which is enlarging with respect to $r$, that grants survival. Precisely, the following holds.

\begin{corollary}[Survival criterion]\label{cor:sur} Assume that the growth rate is such that
 $$
r^2\geq 2d\pi^2 \left(\frac{2\pi^2-3}{6\pi^2}\langle \alpha \rangle+\langle\alpha\beta^2\rangle-\langle\alpha\beta\rangle \right)=:\delta^+,
$$
and that the interval length is such that
\begin{equation*}
\sqrt{2d\pi^2\frac{r-\sqrt{r^2-\delta^+}}{\delta^+}}\leq L\leq \sqrt{2d\pi^2\frac{r+\sqrt{r^2-\delta^+}}{\delta^+}}. 
\end{equation*}
Then $\lambda<0$.
\end{corollary}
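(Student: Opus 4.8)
The plan is to read off the conclusion $\lambda<0$ directly from the improved upper bound in \eqref{vp_estimate_final}, reducing it to a quadratic inequality in the variable $L^2$. First I would observe that, by the very definition of $\delta^+$, the bracket in that upper bound can be rewritten as
$$\frac{\langle\alpha\rangle}{2}\left(\frac{2\pi^2-3}{6\pi^2}+\frac{\langle\alpha\beta^2\rangle}{\langle\alpha\rangle}-\frac{\langle\alpha\beta\rangle}{\langle\alpha\rangle}\right)=\frac{\delta^+}{4d\pi^2},$$
so that \eqref{vp_estimate_final} takes the compact form
$$\lambda<\frac{d\pi^2}{L^2}-r+\frac{L^2\,\delta^+}{4d\pi^2}.$$
Since this inequality is strict, it suffices to prove that the right-hand side is $\leq 0$.

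Second, I would multiply the target inequality by $L^2>0$, turning $\frac{d\pi^2}{L^2}-r+\frac{L^2\delta^+}{4d\pi^2}\leq 0$ into the quadratic inequality
$$P(L^2):=\frac{\delta^+}{4d\pi^2}\,(L^2)^2-r\,L^2+d\pi^2\leq 0.$$
Provided the leading coefficient $\delta^+$ is positive, $P$ is an upward parabola in the variable $L^2$, its discriminant equals $r^2-\delta^+$, and under the hypothesis $r^2\geq\delta^+$ it admits the two real roots $L^2=2d\pi^2\frac{r\mp\sqrt{r^2-\delta^+}}{\delta^+}$. Hence $P(L^2)\leq 0$ exactly when $L^2$ lies between these roots, that is, precisely when $L$ ranges over the interval prescribed in the statement; this gives $\lambda<0$ and concludes.

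The only point requiring genuine care — and the main obstacle — is the positivity $\delta^+>0$, needed both for the parabola to open upwards and for the root formula to be meaningful. I would establish it by a weighted Cauchy--Schwarz inequality: with weight $\alpha>0$ one has $\langle\alpha\beta\rangle^2\leq\langle\alpha\rangle\langle\alpha\beta^2\rangle$, so that, setting $m:=\langle\alpha\beta\rangle/\langle\alpha\rangle$, $\langle\alpha\beta^2\rangle-\langle\alpha\beta\rangle\geq\langle\alpha\rangle\,m(m-1)\geq-\frac{\langle\alpha\rangle}{4}$. Combined with the elementary bound $\frac{2\pi^2-3}{6\pi^2}-\frac14=\frac{\pi^2-6}{12\pi^2}>0$, this yields
$$\frac{\delta^+}{2d\pi^2}=\frac{2\pi^2-3}{6\pi^2}\langle\alpha\rangle+\langle\alpha\beta^2\rangle-\langle\alpha\beta\rangle\geq\frac{\pi^2-6}{12\pi^2}\langle\alpha\rangle>0,$$
so $\delta^+>0$ holds unconditionally. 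Everything else is routine algebra on the quadratic $P$, and I expect no further difficulty.
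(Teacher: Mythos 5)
Your proposal is correct and follows essentially the same route as the paper: the paper's proof likewise plugs the definition of $\delta^+$ into the upper bound of \eqref{vp_estimate_final} to get $\lambda < \frac{d\pi^2}{L^2}-r+\frac{L^2}{2}\frac{\delta^+}{2d\pi^2}$ and then declares the conclusion "a straightforward consequence," which is exactly the quadratic-in-$L^2$ analysis you spell out. Your additional verification that $\delta^+>0$ always holds (via weighted Cauchy--Schwarz and $\frac{2\pi^2-3}{6\pi^2}>\frac14$) is a sound bonus that the paper leaves implicit in the well-posedness of its hypotheses.
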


\begin{proof}
It follows from \eqref{vp_estimate_final} that
$$
\lambda < \frac{d\pi^2}{L^2}-r+\frac{L^2}{2}\frac{\delta^+}{2d\pi^2},
$$
from which the result is a straightforward consequence.
\end{proof}

\begin{example}[Optimum outside the domain]\label{ex:outside}
When $\beta(t)\notin (0,1)$, meaning that the optimum always lies outside the domain $(0,L)$, we can combine Corollaries \ref{cor:ext} and \ref{cor:sur}  to earn insight on parameters regions of extinction and survival, see Figure \ref{fig:parabola}, where we have selected  $\alpha(t)=4$ and $\beta(t)=1.5$.
\begin{figure}[h!]
   \centering
  \includegraphics[scale=0.23]{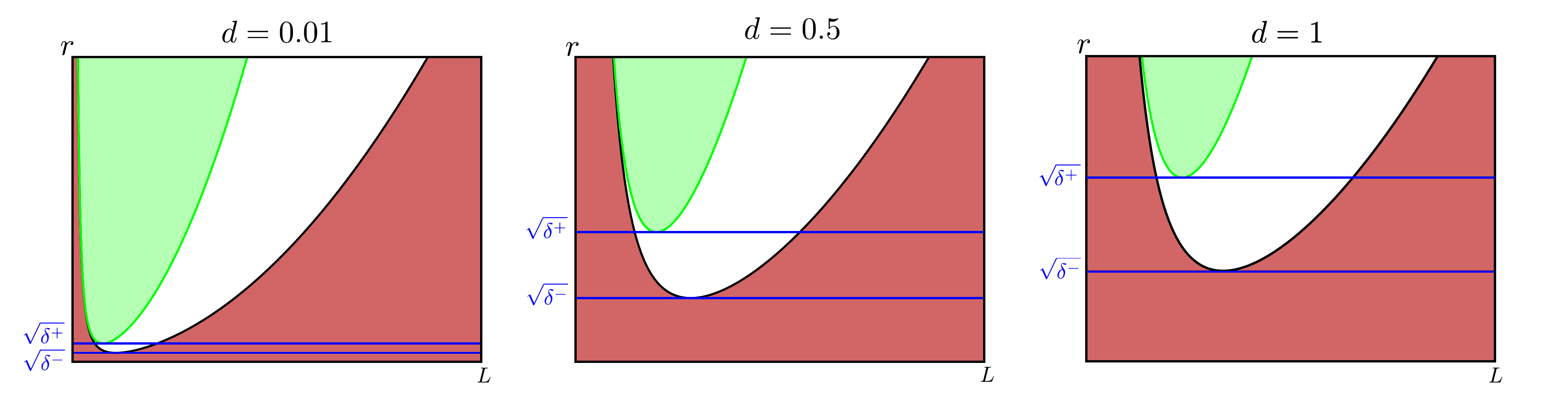}       \caption{\textbf{Sign of the eigenvalue as a function of $(L,r)$.}  The parameters are as described in Example \ref{ex:outside}. In green, the survival zone ($\lambda<0$). In red, the extinction zone ($\lambda>0$). In white, the zone where our estimates are not enough to conclude. 
  }
   \label{fig:parabola}
\end{figure}
Note that in this case, for a fixed set of parameters $d$, $\alpha$, $\beta$, and $r>\sqrt{\delta^+}$, the principal eigenvalue $\lambda$ is not monotone with respect to $L$. Indeed, as already emphasized above, enlarging the domain may decrease the chances of survival. Also, considering for instance  the case $\beta (t)\geq 1$, one has
 $$
 \delta^+-\delta^-=2d\pi^2 \left(\frac{2\pi^2-3}{6\pi^2}\langle \alpha\rangle -\langle \alpha (1-\beta) \rangle  \right),
 $$
meaning that, the smaller the diffusion coefficient $d$, the smaller
 the region of parameters $(L,r)$ for which we cannot conclude on the sign of the eigenvalue, see the white regions in Figure  \ref{fig:parabola}.

However, to gain insight on these uncertain (white) regions of parameters, we may run simulations of \eqref{eq-fix}. For instance, we consider the same case as in Figure \ref{fig:parabola} with  $d=1$, $\alpha (t)=4$, $\beta(t)=1.5$. Then, for a given $r$, we consider the $L^2$ norm of the solution $u(T=2,\cdot)$ starting from the initial datum $u_0(x)=\sin(\frac{\pi}{L}x)$ as a function of the length $L$ of the domain. For $r=15>\sqrt{\delta^+}$, we observe (left panel of Figure \ref{fig:conjecture}) three successive $L$-ranges of extinction-survival-extinction. On the other hand when $\sqrt{\delta^-}<r=5<\sqrt{\delta^+}$, we observe (right panel of Figure \ref{fig:conjecture}) systematic extinction. We conjecture that, at least in the case where $\alpha$ and $\beta$ are constant, there is a threshold value $r^*>0$ that separates \lq\lq systematic extinction'' ($0<r<r^*$) from \lq\lq extinction-survival-extinction depending on $L$'' ($r> r^*)$. However, non-constant $\alpha$ and $\beta$ may lead to  more complex scenarios. 
\begin{figure}[h!]
   \centering
  \includegraphics[scale=0.4]{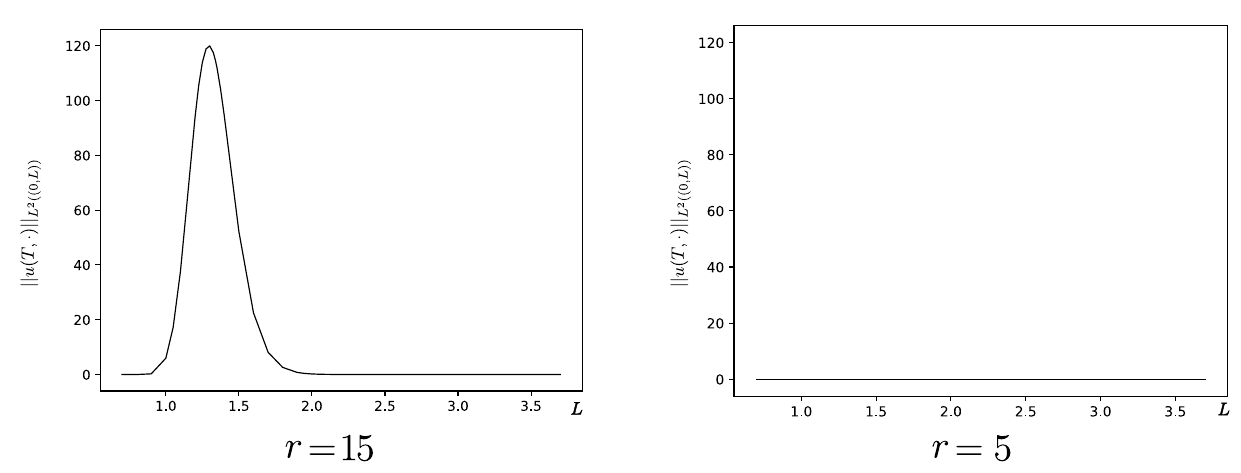}       \caption{\textbf{Evolution of the $L^2$ norm of the solution at final time $T=2$ as a function of the length $L$ of the domain.}  The parameters and the initial datum  are  as described in Example \ref{ex:outside}.
  }
   \label{fig:conjecture}
\end{figure}
\end{example}

Numerical simulations illustrating the  dynamical evolution of problem \eqref{eq-fix} will be presented  in  Section \ref{s:numerics}.

\subsection{To fluctuate, or not to fluctuate}\label{ss:fluctuate}

Here we aim at comparing the chances of survival between a constant fitness and a fluctuating fitness, see \cite{Fig-Mir-18, Fig-Mir-21} and the references therein for related issues.

First, we look for a situation where fluctuations increase the chances of survival. Let us thus consider the  constant fitness case, namely
\begin{equation*}\label{fixed_reaction}
\alpha_1(t)=\alpha >0, \quad  \beta_1(t)=\beta>1,
\end{equation*}
and denote $\lambda_1$ the associated eigenvalue. From Theorem \ref{th:bounds-eigenvalue}, we have
\begin{equation}\label{bounds-lambda1}
\frac{\alpha}{2}L^2 \left(1-\beta\right)^2< \lambda_1 -\frac{d\pi^2}{L^2}+r< \frac{\alpha}{2}L^2\left(\frac{2\pi^2-3}{6\pi^2}+\beta^2-\beta\right).
\end{equation}
Next, we  consider a fluctuating situation described by some $\alpha_2(t)$ with mean $\alpha$, and some $\beta_2(t)$ with mean $\beta$. If the position of the optimum, given by $\beta_2(t)$, is constant, we already know from \cite{Hut-She-Vic-01}, see \eqref{pb-spectral_mean}, that the fluctuation of the strength of the selection $\alpha_2(t)$ increases the chances of survival. However, if $\beta_2(t)$ is non constant, the quadratic term makes the outcome more tricky. We thus consider
\begin{equation}\label{fluc-aide}
\alpha_2(t)=\alpha+a\sin(\omega t), \quad \beta_2(t)=\beta-b\sin(\omega t),
\end{equation}
for $\omega=\frac{2\pi}{T}>0$, $0<a<\alpha$, $b\in \R$, and  denote $\lambda_2$ the associated  eigenvalue. It follows from  $\langle \alpha_2\beta _2\rangle=\alpha \beta -\frac{ab}{2}$, $\langle \alpha_2 \beta_2^2\rangle=\alpha \beta^2+\frac{\alpha b^2}{2}-a\beta b$ and Theorem \ref{th:bounds-eigenvalue} that
$$\lambda_2 -\frac{d\pi^2}{L^2}+r<\frac{\alpha }{2}L^2 \left(\frac{2\pi^2-3}{6\pi^2}+\beta^2-\beta-\frac{a}{\alpha}b(\beta-\frac12)+\frac{b^2}{2}\right).
$$
Hence, for $\lambda_2<\lambda_1$ to hold it is enough to have
\begin{equation*}
b^2-\frac{a}{\alpha}\left(2\beta-1\right)b +2\left(\frac{2\pi^2-3}{6\pi^2}-1+\beta\right)<0.
\end{equation*}
Some elementary computations reveal that this inequality is true as soon as 
\begin{equation}\label{condition_Beta_tilde+}
\beta > \tilde{\beta}:=\frac12+\frac{\alpha^2}{a^2}\left(1+\sqrt{\frac{a^2}{\alpha^2}\left(\frac{2\pi^2-3}{3\pi^2}-1\right)+1}\right),
\end{equation}
and 
\begin{equation}\label{condition_b1<b<b2+}
b_1<b<b_2,
\end{equation}
with 
\begin{equation}\label{def_b1}
b_1:=\frac{a}{\alpha}\left(\beta-\frac12\right)-\sqrt{\frac{a^2}{\alpha^2}\left(\beta-\frac12\right)^2-2\left(\beta+\frac{2\pi^2-3}{6\pi^2}-1\right)}\; >0,
\end{equation}
and
\begin{equation}\label{def_b2}
b_2:=\frac{a}{\alpha}\left(\beta-\frac12\right)+\sqrt{\frac{a^2}{\alpha^2}\left(\beta-\frac12\right)^2-2\left(\beta+\frac{2\pi^2-3}{6\pi^2}-1\right)}\; >0.
\end{equation}
Observe that  $b$ has to be positive, meaning that a key element to improve the chances of survival is the phase opposition between $\alpha_2$ and $\beta_2$, which can be interpreted as a balancing effect between the strength of the selection and the position of the optimum growth. In other words, when the selection becomes stronger the optimum needs to get closer to the domain, while when the optimum goes far from the domain the selection needs to becomes weaker to compensate. We retain the following.

\begin{example}[Fluctuations may help]\label{ex:fluc-help}
 Let $0<a <\alpha$. Let $\beta>\tilde \beta>1$, where $\tilde \beta$ is defined in  \eqref{condition_Beta_tilde+}. Let $b\in (b_1,b_2)$,  where $b_1$, $b_2$ are defined in \eqref{def_b1} and \eqref{def_b2}. Then $\lambda_2<\lambda _1$. 
\end{example} 

Next we look for a situation where fluctuations decrease the chances of survival. The constant fitness case is as above, in particular we have \eqref{bounds-lambda1}. According to the preceding part, we expect that the fluctuations in the position of the optimum and the strength of selection should be {\it in phase}. For the fluctuating case, rather than \eqref{fluc-aide}, we thus use the convention
\begin{equation*}
\alpha_2(t)=\alpha+a\sin(\omega t), \quad \beta_2(t)=\beta+b\sin(\omega t),
\end{equation*} 
for  $\omega=\frac{2\pi}{T}>0$, $0<a<\alpha$, $b\in\R$, and denote $\lambda_2$ the associated  eigenvalue.  Under the additional assumption 
\begin{equation}\label{assumption_beta-b}
\beta-b>1,
\end{equation}
(to be checked {\it a posteriori}),  it follows from Theorem \ref{th:bounds-eigenvalue} and  a straightforward computation that
\begin{equation*}
\langle R^-_2\rangle = \frac{L^2}{2}\left(\alpha(\beta-1)^2+\frac{\alpha b^2+2ab(\beta-1)}{2}\right)<\lambda_2.
\end{equation*}
Hence, for $\lambda_1<\lambda_2$ to hold  it is enough to have
\begin{equation*}
b^2+2\frac{a}{\alpha}(\beta-1) b +2\left(1-\beta-\frac{2\pi^2-3}{6\pi^2}\right)>0.
\end{equation*}
This is obviously true as soon as 
\begin{equation}\label{condition_b3}
b>b_3:= -\frac{a}{\alpha}(\beta-1)+\sqrt{\frac{a^2}{\alpha^2}(\beta-1)^2+2\left(\beta+\frac{2\pi^2-3}{6\pi^2}-1\right)}>0.
\end{equation}
Accordingly with the above remark, $b>0$ is mandatory, meaning that $\alpha_2(t)$ and $\beta_2(t)$ are in phase. Last, we also need $b<\beta-1$, which provides a condition on $\beta$ for the set of possible $b$'s not to be empty. One can check that the following holds.

\begin{example}[Fluctuations may hurt]\label{ex:fluc-hurt}
Let $0<a<\alpha$. Let $\beta>1$ be large enough so that
$$
\beta>1+\frac{1+\sqrt{1+2\frac{2\pi^2-3}{6\pi^2}\left(1+2\frac{a}{\alpha}\right)}}{1+2\frac{a}{\alpha}}.
$$
Let $b\in (b_3,\beta - 1 )$, where $b_3$ is defined in \eqref{condition_b3}. Then $\lambda_1<\lambda_2$. 
\end{example} 

\section{Survival vs. extinction}\label{s:sur-vs-ext}

In this section, we consider the moving habitat case as stated in \eqref{eq}-\eqref{x-opt} and aim at understanding the long time behavior of its solutions $u=u(t,x)$. We should emphasize that the construction of sub- and supersolutions in subsections \ref{ss:switch} and \ref{ss:subsuper} is inspired by that  performed in \cite{All-3, All-1}. However, in the problems considered there,  $\alpha \equiv 0$ was assumed, i.e. the effect of  selection  was ignored. 

\subsection{Switching  to a fixed domain}\label{ss:switch}

We first change the spatial/phenotypic variable to switch to an equation on a fixed domain. For $L_0>0$, we write
\begin{equation}\label{u-v}
u(t,x)=v(t,y),  \quad y:=\frac{x-A(t)}{L(t)}L_0,
\end{equation}
and reach 
\begin{equation}\label{eq-v}
\begin{cases}
v_t=\frac{dL^2_0}{L^2(t)}\, v_{yy}+\frac{\dot{A}(t)L_0+y\dot{L}(t)}{L(t)}\, v_{y}+\left(r-\frac{\alpha(t) L^2(t)}{2L^2_0}\left(y-\beta(t) L_0\right)^2\right)v,  & \quad  t>0,\, 0<y<L_0,\vspace{5pt}\\
v(t,0)=v(t,L_0)=0, & \quad  t>0,
\end{cases}
\end{equation}
which is, obviously, a reaction-advection-diffusion equation with time and space dependent coefficients. 

Next, to suppress the advection term, we change the unknown function through  
\begin{equation}\label{v-w}
w(t,y):=v(t,y)\left(\frac{L(t)}{L_0}\right)^{1/2}\exp\left(-rt+\int_{0}^{t}\frac{\dot{A}(s)^2}{4d}ds\right)\exp\left(\frac{y^2\dot{L}(t)L(t)}{4dL^2_0}+\frac{y\dot{A}(t)L(t)}{2dL_0}\right),
\end{equation}
 and reach
\begin{equation}\label{eq-w}
\begin{cases}
w_t=\frac{dL^2_0}{L(t)^2}w_{yy}+\left(\frac{\ddot{L}(t)L(t)}{4dL^2_0}\,y^2+\frac{\ddot{A}(t)L(t)}{2dL_0}\, y-\frac{\alpha(t) L^2(t)}{2L^2_0}\left(y-\beta(t) L_0 \right)^2\right)w,  & \quad  t>0,\, 0<y<L_0,\vspace{5pt}\\
w(t,0)=w(t,L_0)=0, & \quad  t>0.
\end{cases}
\end{equation}

\subsection{Construction of sub- and supersolutions}\label{ss:subsuper}
 
Now, to  get closer from an equation having the form of \eqref{eq-fix}, we define 
\begin{equation}\label{Q+}
 \overline{Q}(t):=\max_{0\leq z\leq1}\left(\frac{\ddot{L}(t)L(t)}{4d}z^{2}+\frac{\ddot{A}(t)L(t)}{2d}z\right),
\end{equation}
\begin{equation}\label{Q-}
\underline{Q}(t):=\min_{0\leq z\leq1}\left(\frac{\ddot{L}(t)L(t)}{4d}z^{2}+\frac{\ddot{A}(t)L(t)}{2d}z\right),
\end{equation}
so that $w=w(t,y)$ solving \eqref{eq-w} is a subsolution for the problem
\begin{equation}\label{eq_with_Q+}
\begin{cases}
\overline{w}_t=\frac{dL^2_0}{L(t)^2}\overline{w}_{yy}+\left(\overline{Q}(t)-\frac{\alpha(t) L^2(t)}{2L^2_0}\left(y-\beta(t) L_0 \right)^2\right)\overline{w},  & \quad  t>0,\, 0<y<L_0,\vspace{5pt}\\
\overline{w}(t,0)=\overline{w}(t,L_0)=0, & \quad  t>0,
\end{cases}
\end{equation}
and a supersolution for the problem
\begin{equation}\label{eq_with_Q-}
\begin{cases}
\underline{w}_t=\frac{dL^2_0}{L(t)^2}\underline{w}_{yy}+\left(\underline{Q}(t)-\frac{\alpha(t) L^2(t)}{2L^2_0}\left(y-\beta(t) L_0 \right)^2\right)\underline{w},  & \quad  t>0,\, 0<y<L_0,\vspace{5pt}\\
\underline{w}(t,0)=\underline{w}(t,L_0)=0, & \quad  t>0.
\end{cases}
\end{equation}

We look for a  supersolution to \eqref{eq_with_Q+} in the form 
\begin{equation}\label{claim_sursol}
\omega^+(t,y):=\varphi\left(\int_0^t \frac{L_0^2}{L^2(s)}ds,y\right)\exp\left(f(t)+\int_0^t\overline{Q}(s)ds\right),
\end{equation}
with $f=f(t)$ to be selected.  Here, $\varphi=\varphi(\tau,y)$ denotes the $L^\infty$-normalized principal eigenfunction solving
\eqref{pb-spectral} on the interval $(0,L_0)$ and associated with the principal eigenvalue denoted $\lambda$ (estimated in  Section \ref{s:fixed-domain}), namely
\begin{equation}\label{pb-spectral-sur-0-L_0}
\begin{cases}
\varphi _\tau -d \varphi_{yy}-\left(r-\frac{\alpha(\tau)}{2}\left(y-\beta(\tau)L_0\right)^2\right)\varphi=\lambda \varphi,  & \quad \tau\in \R, \, 0<y<L_0,\vspace{5pt}\\
\varphi(\tau,0)=\varphi(\tau,L_0)=0, &\quad \tau\in \R, \vspace{5pt}\\
\varphi>0, & \quad  \tau\in \R, \, 0<y<L_0, \vspace{5pt} \\
\varphi(\tau,y)=\varphi(\tau+T,y), & \quad  \tau\in \R, \, 0<y<L_0.
\end{cases}
\end{equation}
Plugging the ansatz  \eqref{claim_sursol} into \eqref{eq_with_Q+}, it follows from straightforward computations that the choice
\begin{equation}\label{def-f}
f(t)=\int_0^t \left(-\frac{(\lambda+r)L_0^2}{L^2(s)}+\overline P(s)\right)  ds
\end{equation}
where
\begin{eqnarray}
\overline P(s):=\frac 12\max_{0\leq y\leq L_0} & \Bigg[\frac{L_0^2}{L^2(s)}\alpha\left(\int_0^s \frac{L_0^2}{L^2(T)}dT\right)\left(y-\beta\left(\int_0^s \frac{L_0^2}{L^2(T)}dT\right)L_0\right)^2\nonumber\\
&-\frac{L^2(s)}{L_0^2}\alpha(s)\left(y-\beta(s)L_0\right)^2\Bigg]\label{overline-P}
\end{eqnarray}
does make $\omega^+$ a supersolution to \eqref{eq_with_Q+}.

Similarly, the choice 
\begin{equation}\label{def-g}
g(t)=\int_0^t \left(-\frac{(\lambda+r)L_0^2}{L^2(s)}+\underline P (s) \right)  ds
\end{equation}
where
\begin{eqnarray}
\underline P(s):=\frac 12 \min_{0\leq y\leq L_0} & \Bigg[\frac{L_0^2}{L^2(s)}\alpha\left(\int_0^s \frac{L_0^2}{L^2(T)}dT\right)\left(y-\beta\left(\int_0^s \frac{L_0^2}{L^2(T)}dT\right)L_0\right)^2\nonumber\\
&-\frac{L^2(s)}{L_0^2}\alpha(s)\left(y-\beta(s)L_0\right)^2\Bigg]\label{underline-P}
\end{eqnarray}
makes
\begin{equation}\label{claim_soussol}
\omega^-(t,y):=\varphi\left(\int_0^t \frac{L_0^2}{L^2(T)}dT,y\right)\exp\left(g(t)+\int_0^t\underline{Q}(s)ds\right)
\end{equation}
a subsolution to \eqref{eq_with_Q-}.

Putting all together, we have the following.

\begin{theorem}[Bounds for the solution]\label{th:sub-supersol} Let $u=u(t,x)$ be the solution to \eqref{eq} starting from 
a nonnegative and nontrivial $u_0\in L^\infty(A(0),A(0)+L(0))$, or equivalently, for any $L_0>0$, $v=v(t,y)$ the solution to \eqref{eq-v} starting from $v_0(y)=u_0\left(\frac{L(0)}{L_0}y+A(0)\right)$, or equivalently $w=w(t,y)$ the solution to \eqref{eq-w}
 starting from $w_0(y)=v_0(y)\left(\frac{L(0)}{L_0}\right)^{1/2}\exp\left(\frac{\dot L (0)L(0)}{4dL_0^2}y^2+\frac{\dot A(0)L(0)}{2dL_0}y \right)$. Assume there are $0<a<b<+\infty$ such that
\begin{equation}\label{initial-order}
a\varphi\left(0,y\right)\leq w_0(y)\leq b \varphi\left(0,y\right), \quad 0< y< L_0.
\end{equation}

Then, for any $t>0$, any $0<y<L_0$, 
\begin{align}
&v(t,y)  \leq    b\,\varphi \left(\int_0^t \frac{L_0^2}{L^2(s)}ds,y\right) \left(\frac{L_0}{L(t)}\right)^{1/2}\nonumber\\
&\quad \times \exp\left( rt+\int_{0}^{t} \left( -\frac{\dot{A}^2(s)}{4d}-\frac{(\lambda+r)L_0^2}{L^2(s)}+\overline{P}(s)+\overline{Q}(s)\right)\,ds
-\frac{\dot{L}(t)L(t)}{4dL_0^2}y^{2}-\frac{\dot{A}(t)L(t)}{2dL_0}y\right),\label{sursol2}
\end{align}
and
\begin{align}
&v(t,y)  \geq  a\,\varphi\left(\int_0^t \frac{L_0^2}{L^2(s)}ds,y\right)\left(\frac{L_0}{L(t)}\right)^{1/2}\nonumber \\
&\quad  \times \exp\left( rt+\int_{0}^{t} \left( -\frac{\dot{A}^2(s)}{4d}-\frac{(\lambda+r)L_0^2}{L^2(s)}+\underline{P}(s)+\underline{Q}(s)\right)\,ds-\frac{\dot{L}(t)L(t)}{4dL_0^2}y^{2}-\frac{\dot{A}(t)L(t)}{2dL_0}y\right),\label{soussol2}
\end{align}
where $\overline Q$, $\underline{Q}$ are defined in \eqref{Q+}, \eqref{Q-}, while 
$\overline P$, $\underline P$ are defined in \eqref{overline-P}, \eqref{underline-P}.
\end{theorem}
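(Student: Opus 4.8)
The plan is to combine the sub-/supersolution structure assembled above with the parabolic comparison principle, and then to undo the two successive changes of unknown. All the analytic work — namely that $w$ solving \eqref{eq-w} is a subsolution of \eqref{eq_with_Q+} and a supersolution of \eqref{eq_with_Q-}, and that the explicit functions $\omega^+$ in \eqref{claim_sursol} (with $f$ from \eqref{def-f}) and $\omega^-$ in \eqref{claim_soussol} (with $g$ from \eqref{def-g}) are respectively a supersolution of \eqref{eq_with_Q+} and a subsolution of \eqref{eq_with_Q-} — has already been carried out, so the theorem should follow by comparison followed by algebra. First I would work entirely at the level of $w$ on the fixed cylinder $(0,+\infty)\times(0,L_0)$. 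Since the two problems \eqref{eq_with_Q+} and \eqref{eq_with_Q-} are linear and homogeneous, $b\,\omega^+$ is again a supersolution and $a\,\omega^-$ again a subsolution for any constants $a,b>0$.

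Next I would check the comparison data. At $t=0$ the time-change integral $\int_0^0 L_0^2/L^2(s)\,ds$ vanishes and $f(0)=g(0)=0$, so $\omega^\pm(0,y)=\varphi(0,y)$; hence the hypothesis \eqref{initial-order} reads exactly $a\,\omega^-(0,\cdot)\leq w_0\leq b\,\omega^+(0,\cdot)$. All three functions carry the same zero Dirichlet data at $y=0$ and $y=L_0$. Applying the comparison principle to the pair $(w,b\,\omega^+)$ for \eqref{eq_with_Q+}, and to the pair $(a\,\omega^-,w)$ for \eqref{eq_with_Q-}, I would conclude $a\,\omega^-(t,y)\leq w(t,y)\leq b\,\omega^+(t,y)$ for all $t>0$ and $0<y<L_0$.

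Finally I would translate these inequalities back through \eqref{v-w}. Solving that relation for $v$ multiplies $w$ by the factor $(L(t)/L_0)^{-1/2}\exp\!\big(rt-\int_0^t \dot A(s)^2/(4d)\,ds\big)\exp\!\big(-y^2\dot L(t)L(t)/(4dL_0^2)-y\dot A(t)L(t)/(2dL_0)\big)$. Inserting the explicit expressions \eqref{claim_sursol}, \eqref{def-f} for $\omega^+$ and \eqref{claim_soussol}, \eqref{def-g} for $\omega^-$, and collecting every exponential weight, yields precisely the upper bound \eqref{sursol2} and the lower bound \eqref{soussol2}, with the principal-eigenfunction factor $\varphi\big(\int_0^t L_0^2/L^2(s)\,ds,\,y\big)$ and the claimed terms $\overline P+\overline Q$ and $\underline P+\underline Q$ in the exponent.

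I expect no deep obstacle, since the construction of $\omega^\pm$ has done the heavy lifting. The one point that demands care is the comparison principle itself: the zeroth-order coefficient in \eqref{eq_with_Q+}–\eqref{eq_with_Q-} is sign-indefinite and the diffusion $dL_0^2/L^2(t)$ is time-dependent. I would handle this in the usual way, substituting $z=e^{Ks}\tilde z$ on any finite horizon $[0,S]$ with $K$ exceeding the supremum over $[0,S]\times[0,L_0]$ of that coefficient — finite because $A,L\in C^2$ and $\alpha,\beta$ are continuous — so that the shifted zeroth-order term becomes negative and the weak maximum principle applies directly; letting $S\to+\infty$ then gives the bounds for all times. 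The remaining effort is the bookkeeping of matching the three layers of exponential weights when passing from $w$ back to $v$, which is routine once the comparison is in place.
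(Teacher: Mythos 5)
Your proposal is correct and follows essentially the same route as the paper's proof: identify \eqref{initial-order} with the ordering $a\,\omega^-(0,\cdot)\leq w_0\leq b\,\omega^+(0,\cdot)$, apply the comparison principle at the level of $w$ using the already-established sub/supersolution structure of \eqref{eq_with_Q+}--\eqref{eq_with_Q-}, and return to $v$ via \eqref{v-w}. Your extra care in justifying the comparison principle (the exponential shift to absorb the sign-indefinite zeroth-order coefficient on finite horizons) is a standard detail the paper leaves implicit, not a different method.
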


\begin{proof} Since \eqref{initial-order} is nothing else than $a\omega^-(0,y)\leq w_0(y)\leq b \omega^+(0,y)$, we deduce from the above analysis and the comparison principle that 
$a\omega^-(t,y)\leq w(t,y)\leq b\omega^+(t,y)$ for any $t>0$, any $0<y<L_0$. Using the expressions \eqref{claim_sursol}, \eqref{claim_soussol}, and returning to $v$ via \eqref{v-w}, we  reach the conclusion.
\end{proof}

\begin{remark}\label{rem:fully} If $A$ and $L$ are, as $\alpha$ and $\beta$, $T$-periodic, some refinements are achievable. Indeed, if we further assume that $\ddot{A}$, $\ddot{L}$ are Hölder continuous, and 
\begin{equation}
2d\alpha(t) L(t)-\ddot{L}(t)>0, \quad \forall t\in \R, 
\end{equation}
we can rewrite the equation in \eqref{eq-w} as
 \begin{equation}\label{eq-w tilde}
w_t=\frac{dL^2_0}{L(t)^2}w_{yy}+\left({Q(t)-\frac{\tilde{\alpha}(t)}{2}\frac{L^2(t)}{L_0^2}} \left(y-\tilde{\beta}(t) L_0 \right)^2\right) w,   \quad  t>0,\, 0<y<L_0,
\end{equation}
where
\begin{equation*}
Q(t):=\frac{L(t)}{4d}\frac{\left(\ddot{A}(t)+2d\alpha(t)\beta(t)L(t)\right)^2}{2d\alpha(t)L(t)-\ddot{L}(t)}-\frac{\alpha(t)\beta^2(t) L^2(t)}{2},
\end{equation*}
and
\begin{equation*}
\tilde{\alpha}(t):=\frac{1}{2dL(t)}\left(2d\alpha(t)L(t)-\ddot{L}(t)\right),\quad 
\tilde{\beta}(t):=\frac{\ddot{A}(t)+2d\alpha (t)\beta (t)L(t)}{2d\alpha(t)L(t)-\ddot{L}(t)}.
\end{equation*}
Then, one can check that \eqref{sursol2} and \eqref{soussol2} are still valid after replacing both $\overline Q$ and $\underline Q$ by $Q$, while in the definitions \eqref{overline-P} and \eqref{underline-P} of $\overline P$ and $\underline P$, $\alpha$ and $\beta$ are replaced by $\tilde \alpha$ and $\tilde \beta$.
\end{remark}

\subsection{The shift effect}\label{ss:shift}

In this short subsection, we take advantage of Theorem \ref{th:sub-supersol} to analyze the effect of a power-like shift, say $A(t)=c(1+t)^a$ with $c>0$, $a\in\R$, when the size of the domain is constant $L(t)=L_0$. In particular, it follows from  \eqref{Q+}, \eqref{Q-}, \eqref{overline-P}, \eqref{underline-P}, that
$$
\overline Q(t)=\frac{L_0}{2d}\max_{0\leq z\leq 1} \ddot{A}(t)z=\frac{L_0}{2d}\max(\ddot{A}(t),0), \quad  \underline Q(t)=\frac{L_0}{2d}\min_{0\leq z\leq 1} \ddot{A}(t)z=\frac{L_0}{2d}\min(\ddot{A}(t),0),$$
and $\overline P(t)=\underline P(t)=0$. 

First, we show that, in presence of a superlinear shift of the domain,  the population is doomed to extinction, regardless of how good the conditions are. 

\begin{corollary}[Superlinear shift]\label{cor:superlinear-shift}
Let the assumptions of Theorem \ref{th:sub-supersol} hold. Assume further that $L(t)=L_0$ for some $L_0>0$ and $A(t)=c(1+t)^a$ for some $c>0$ and $a>1$.

Then there are $C_1>0$, $C_2>0$ such that
$$
\Vert v(t,\cdot)\Vert _{L^\infty(0,L_0)}\leq C_1e^{-C_2 t^{2a-1}}, \quad \forall t>0,
$$
so that the solution uniformly goes to extinction at large times. 
\end{corollary}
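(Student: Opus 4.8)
The plan is to specialize the upper bound \eqref{sursol2} of Theorem \ref{th:sub-supersol} to the present setting and then isolate the dominant term in the exponent. First I would set $L(t)=L_0$, which kills every $L$-dependent contribution: since $\dot L\equiv\ddot L\equiv 0$ one has $\frac{L_0^2}{L^2(s)}\equiv 1$, so $\int_0^t\frac{L_0^2}{L^2(s)}\,ds=t$, the prefactor $\left(\frac{L_0}{L(t)}\right)^{1/2}$ equals $1$, the Gaussian term $\frac{\dot L(t)L(t)}{4dL_0^2}y^2$ vanishes, and, as already recorded at the start of this subsection, $\overline P\equiv 0$ while $\overline Q(s)=\frac{L_0}{2d}\max(\ddot A(s),0)$. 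Because $A(t)=c(1+t)^a$ with $c>0$, $a>1$ gives $\ddot A(t)=ca(a-1)(1+t)^{a-2}>0$, we simply have $\overline Q(s)=\frac{L_0}{2d}\ddot A(s)$.

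Next I would discard the $y$-dependent factors by bounding them from above by $1$. Since $\varphi$ is $L^\infty$-normalized we have $\varphi\le 1$, and since $\dot A(t)=ca(1+t)^{a-1}>0$ the factor $\exp\bigl(-\frac{\dot A(t)}{2d}y\bigr)\le 1$ for $y\in(0,L_0)$. Taking the supremum in $y$ then leaves the $y$-independent estimate
\[
\Vert v(t,\cdot)\Vert_{L^\infty(0,L_0)}\le b\,\exp\left(-\lambda t-\frac1{4d}\int_0^t\dot A^2(s)\,ds+\int_0^t\overline Q(s)\,ds\right).
\]

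Then I would compute the two integrals explicitly. Using $\dot A^2(s)=c^2a^2(1+s)^{2a-2}$ and $\ddot A(s)=ca(a-1)(1+s)^{a-2}$, elementary integration yields $\int_0^t\dot A^2=\frac{c^2a^2}{2a-1}\bigl((1+t)^{2a-1}-1\bigr)$ and $\int_0^t\overline Q=\frac{L_0 ca}{2d}\bigl((1+t)^{a-1}-1\bigr)$ (all exponents are nonzero because $a>1$). The exponent is therefore dominated by the strictly negative term $-\frac{c^2a^2}{4d(2a-1)}(1+t)^{2a-1}$, while the remaining contributions — the linear term $-\lambda t$ and the $\overline Q$-integral of order $(1+t)^{a-1}$ — are $o(t^{2a-1})$ as $t\to\infty$, precisely because $a>1$ forces both $1<2a-1$ and $a-1<2a-1$. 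Since $(1+t)^{2a-1}\ge t^{2a-1}$, for any $C_2<\frac{c^2a^2}{4d(2a-1)}$ the lower-order terms can be absorbed, and choosing $C_1$ large enough to cover the (bounded) small-time range gives $\Vert v(t,\cdot)\Vert_{L^\infty}\le C_1 e^{-C_2 t^{2a-1}}$ for all $t>0$.

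There is no serious analytic obstacle here; the argument is essentially bookkeeping once Theorem \ref{th:sub-supersol} is in hand. The one point requiring care is verifying that the positive contribution coming from the acceleration of the shift, namely $\int_0^t\overline Q$, is genuinely of lower order than the negative contribution $-\frac1{4d}\int_0^t\dot A^2$ produced by the advection-removing change of unknown \eqref{v-w}. This is exactly where the superlinearity assumption $a>1$ enters: the drift grows like $(1+t)^{a-1}$ and its square integrates to order $(1+t)^{2a-1}$, which strictly outpaces the order $(1+t)^{a-1}$ growth of the acceleration integral, guaranteeing that the shift cost dominates and forces uniform extinction irrespective of the sign of $\lambda$.
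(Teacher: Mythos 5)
Your proposal is correct and follows essentially the same route as the paper: specialize the upper bound \eqref{sursol2} to $L(t)=L_0$, $A(t)=c(1+t)^a$, discard the nonpositive $y$-dependent terms and the normalized eigenfunction factor, compute the integrals of $\dot A^2$ and $\overline Q$ explicitly, and conclude because $2a-1>\max(1,a-1)$. The paper compresses all of this into ``some direct computations'' leading to its bound \eqref{par-dessus}; your write-up merely makes those computations, and the final absorption of lower-order terms into $C_1$, $C_2$, explicit.
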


\begin{proof} Since $\overline Q(t)=\ddot{A}(t)\frac{L_0}{2d}=\frac{ca(a-1)L_0}{2d}(1+t)^{a-2}$, we deduce from \eqref{sursol2} and some direct computations  that there is $C>0$ such that, for any $t>0$, any $0<y<L_0$,
\begin{equation}\label{par-dessus}
v(t,y)\leq C \exp\left(-\lambda t-\frac{c^2a^2}{4d(2a-1)}(1+t)^{2a-1}+\frac{L_0ca}{2d}(1+t)^{a-1}\right),
\end{equation}
from which the result follows since $2a-1>\max(1,a-1)$.
\end{proof}

Next, we show that a population that survives in a fixed domain (thanks to favorable enough conditions) would not be affected much by a sublinear shift of the domain (the other conditions being unchanged).

\begin{corollary}[Sublinear shift]\label{cor:sublinear-shift}  Let the assumptions of Theorem \ref{th:sub-supersol} hold. Assume further that $L(t)=L_0$ for some $L_0>0$ and $A(t)=c(1+t)^a$ for some $c>0$ and $a<1$, and that
\begin{equation}\label{cond-survie-fixed}
\lambda<0. 
\end{equation}

Then, for any $\ep>0$, there is $C>0$ such that 
$$
\min _{\ep\leq y\leq L_0-\ep} v(t,y)\geq C e^{-\lambda t}, \quad \forall t>0,
$$
so that the solution locally uniformly tends to infinity at large times.
\end{corollary}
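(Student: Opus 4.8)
The plan is to specialize the lower bound \eqref{soussol2} of Theorem \ref{th:sub-supersol} to the present regime $L(t)\equiv L_0$, $A(t)=c(1+t)^a$, and to read off from it the decisive growth rate $-\lambda t$. With $L\equiv L_0$ one has $\int_0^t L_0^2/L^2(s)\,ds=t$, the prefactor $(L_0/L(t))^{1/2}=1$, the quantity $\frac{(\lambda+r)L_0^2}{L^2(s)}$ reduces to $\lambda+r$, the factor $\dot L\equiv 0$ kills the $y^2$ term, and, as already recorded in Subsection \ref{ss:shift}, $\underline P\equiv 0$ while $\underline Q(s)=\frac{L_0}{2d}\min(\ddot A(s),0)$. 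Substituting these, and using the crucial cancellation $rt-(\lambda+r)t=-\lambda t$, the bound \eqref{soussol2} becomes
\begin{equation*}
v(t,y)\ \geq\ a\,\varphi(t,y)\,\exp\left(-\lambda t-\frac{1}{4d}\int_0^t \dot A^2(s)\,ds+\int_0^t \underline Q(s)\,ds-\frac{\dot A(t)}{2d}\,y\right).
\end{equation*}
Since $\lambda<0$ by \eqref{cond-survie-fixed}, the leading term $-\lambda t$ is a positive multiple of $t$, so it suffices to show that the three remaining contributions in the exponent are negligible in front of it, and that $\varphi$ is bounded below on $[\ep,L_0-\ep]$.

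Next I would control each correction term using $\dot A(t)=ca(1+t)^{a-1}$ and $\ddot A(t)=ca(a-1)(1+t)^{a-2}$, both tending to $0$ as $t\to+\infty$ because $a<1$. The boundary contribution satisfies $|\frac{\dot A(t)}{2d}y|\leq \frac{|ca|}{2d}L_0$ for all $t\geq 0$ and $0<y<L_0$, hence is bounded. The drift average $\int_0^t\underline Q$ is also bounded: when $0<a<1$ one has $\ddot A<0$, so $\underline Q=\frac{L_0}{2d}\ddot A$ and $\int_0^t\ddot A=\dot A(t)-\dot A(0)$ stays bounded, while when $a\leq 0$ one has $\ddot A\geq 0$ and $\underline Q\equiv 0$. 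The only possibly growing term is $-\frac{1}{4d}\int_0^t\dot A^2=-\frac{c^2a^2}{4d}\int_0^t(1+s)^{2a-2}\,ds$; since $2a-2<0$, this integral converges when $a<\frac12$ (so the term is bounded below), and when $\frac12\leq a<1$ it grows at most like $(1+t)^{2a-1}$ with $2a-1<1$, i.e.\ sublinearly. In all cases the total correction is $o(t)$, and is bounded below by a constant once $a<\frac12$.

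Finally I would bound $\varphi$ from below on the relevant compact set. By \eqref{pb-spectral-sur-0-L_0} the eigenfunction $\varphi$ is continuous, strictly positive on $(0,L_0)$ and $T$-periodic in its first variable; hence it attains a positive minimum $m_\ep>0$ on the compact set $[0,T]\times[\ep,L_0-\ep]$, and by periodicity $\varphi(t,y)\geq m_\ep$ for all $t\in\R$ and $\ep\leq y\leq L_0-\ep$. Combining this with the previous step gives $\min_{\ep\leq y\leq L_0-\ep}v(t,y)\geq C\,e^{-\lambda t-o(t)}$, which tends to $+\infty$ since $-\lambda>0$; when the correction is bounded below (e.g.\ $a<\frac12$) this is exactly the stated bound $C\,e^{-\lambda t}$.

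The main obstacle is the control of the quadratic-drift integral $-\frac{1}{4d}\int_0^t\dot A^2$: this is precisely the term that forces extinction in the superlinear case (Corollary \ref{cor:superlinear-shift}), and here the entire point is that the sublinearity $a<1$ makes it $o(t)$, so that the persistence rate $-\lambda t$ survives. I would be careful in phrasing the bound $Ce^{-\lambda t}$, since strictly it is the boundedness of this integral (which holds for the slower shifts) that yields a genuine constant $C$, whereas for $\frac12\leq a<1$ one obtains $e^{-\lambda t}$ only up to a subexponential factor — which still suffices for the claimed local uniform divergence to infinity.
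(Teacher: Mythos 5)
Your proof is correct and follows essentially the same route as the paper: it specializes \eqref{soussol2} to $L\equiv L_0$, $A(t)=c(1+t)^a$ (so $\underline P\equiv 0$, $\underline Q(s)=\frac{L_0}{2d}\min(\ddot A(s),0)$, and $rt-(\lambda+r)t=-\lambda t$), bounds $\varphi$ below on $[\ep,L_0-\ep]$ via positivity and $T$-periodicity, and concludes because $-\lambda t$ dominates the remaining terms, exactly as the paper does when it invokes $1>\max(2a-1,a-1)$ in its estimate \eqref{par-dessous}. Your closing caveat is also well observed: for $\frac12\leq a<1$ the paper's own bound \eqref{par-dessous} likewise yields $e^{-\lambda t}$ only up to a subexponential factor, which suffices for the stated local uniform divergence but not, strictly, for the constant-$C$ inequality as written.
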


\begin{proof} Observe that $\underline Q(t)=\ddot{A}(t)\frac{L_0}{2d}=\frac{ca(a-1)L_0}{2d}(1+t)^{a-2}$ if $0\leq a<1$ while $\underline Q(t)=0$ if $a<0$ so that, in any case, $\underline Q(t)\geq \frac{ca(a-1)L_0}{2d}(1+t)^{a-2}$. Hence, we deduce from \eqref{soussol2} and some direct computations that, for any $\ep>0$, there is $C>0$ such that, for any $t>0$, any $\ep<y<L_0-\ep$,
\begin{equation}\label{par-dessous}
v(t,y)\geq C  \exp\left(-\lambda t -\frac{c^2a^2}{4d(2a-1)}(1+t)^{2a-1}+\frac{L_0ca}{2d}(1+t)^{a-1}(1-\frac{y}{L_0})\right),
\end{equation}
from which the result follows since $1>\max(2a-1,a-1)$ (note that when $a=\frac 1 2$ the term $-\frac{c^2a^2}{4d(2a-1)}(1+t)^{2a-1}$ is obviously replaced by $-\frac{c^2a^2}{4d}\ln (1+t)$). 
\end{proof}

Hence, the condition \eqref{cond-survie-fixed} insures survival not only in a fixed domain ($a=0$) but still if the shift is sublinear ($a<1$). Note that \eqref{vp_estimate_final}  shows that \eqref{cond-survie-fixed} holds as soon as 
$$
r\geq \frac{d\pi^2}{L_0^2}+L_0^2\frac{\langle \alpha \rangle}{2}\left(\frac{2\pi^2-3}{6\pi^2}+\frac{\langle\alpha\beta^2\rangle}{\langle\alpha\rangle}-\frac{\langle\alpha\beta\rangle}{\langle\alpha\rangle} \right).
$$ 

Also, from the above proof, the critical case $\lambda=0$ insures survival (but not necessarily explosion) whenever $a<\frac 12$. As for the case $\frac 12 \leq a<1$, reproducing the arguments of Corollary \ref{cor:superlinear-shift}, one reaches a similar extinction result. In other words, the following holds. 

\begin{corollary}[Sublinear shift, critical case]\label{cor:sublinear-shift-critical}  Let the assumptions of Theorem \ref{th:sub-supersol} hold. Assume further that $L(t)=L_0$ for some $L_0>0$ and $A(t)=c(1+t)^a$ for some $c>0$ and $a<1$, and that
\begin{equation}\label{cond-survie-fixed-critical}
\lambda=0. 
\end{equation}
\begin{enumerate}
\item[(i)] Assume $a<\frac 12$. Then, for any $\ep>0$, there is $C>0$ such that 
$$
\min _{\ep\leq y\leq L_0-\ep} v(t,y)\geq C, \quad \forall t>0.
$$
\item[(ii)] Assume $\frac 12 \leq a<1$. Then there are $C_1>0$, $C_2>0$ such that, for all $t>0$,
$$
\Vert v(t,\cdot)\Vert _{L^\infty(0,L_0)}\leq \begin{cases}C_1e^{-C_2 t^{2a-1}}, &\text{ if } \frac 12 <a<1,\\
C_1e^{-C_2 \ln (1+t)}, &\text{ if } a=\frac 12,
\end{cases}
$$
so that the solution uniformly goes to extinction at large times. 
\end{enumerate}
\end{corollary}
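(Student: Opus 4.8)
The plan is to read both items as the critical ($\lambda=0$) borderline cases of the computations already carried out for Corollaries \ref{cor:sublinear-shift} and \ref{cor:superlinear-shift}. Since those estimates are derived keeping $\lambda$ as a free parameter, the lower bound \eqref{par-dessous} and the supersolution estimate \eqref{sursol2} remain available, as do the explicit values $\overline Q(t)=\frac{L_0}{2d}\max(\ddot A(t),0)$, $\underline Q(t)=\frac{L_0}{2d}\min(\ddot A(t),0)$ and $\overline P\equiv\underline P\equiv 0$ recorded at the start of this subsection. The whole proof is then a matter of specializing these to $\lambda=0$, $L\equiv L_0$, $A(t)=c(1+t)^a$ and tracking which exponential terms survive as $t\to+\infty$.

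For item $(i)$ I would start from \eqref{par-dessous} and set $\lambda=0$. Since $a<\frac12$ we have both $2a-1<0$ and $a-1<0$, so $(1+t)^{2a-1}$ and $(1+t)^{a-1}$ stay bounded (in fact tend to $0$). A quick sign check shows the coefficient $-\frac{c^2a^2}{4d(2a-1)}$ is \emph{positive}, so the first exponential term is bounded; and on the compact range $\ep\leq y\leq L_0-\ep$ the factor $(1-\frac{y}{L_0})$ is bounded, so the second term is bounded as well. Hence the exponent in \eqref{par-dessous} is bounded below uniformly in $t>0$ and in $y\in[\ep,L_0-\ep]$, yielding the desired uniform positivity $\min_{\ep\leq y\leq L_0-\ep}v(t,y)\geq C$. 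As in Corollary \ref{cor:sublinear-shift}, I use here that the positive, continuous, $T$-periodic eigenfunction $\varphi$ is bounded below by a positive constant away from the endpoints.

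For item $(ii)$ I would instead use the upper bound. Since $0<a<1$ gives $\ddot A(t)=ca(a-1)(1+t)^{a-2}<0$, one has $\overline Q\equiv 0$, and as the boundary term $-\frac{\dot A(t)}{2d}y$ is nonpositive for $y>0$, \eqref{sursol2} reduces, after taking the supremum in $y$ and using $\varphi\leq1$, to
\begin{equation*}
\Vert v(t,\cdot)\Vert_{L^\infty(0,L_0)}\leq C\exp\left(-\frac{1}{4d}\int_0^t \dot A^2(s)\,ds\right),\qquad \dot A(s)=ca(1+s)^{a-1}.
\end{equation*}
It then remains to evaluate $\int_0^t(1+s)^{2a-2}\,ds$. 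For $\frac12<a<1$ this equals $\frac{(1+t)^{2a-1}-1}{2a-1}$ with $2a-1>0$, producing the stretched-exponential decay $C_1e^{-C_2 t^{2a-1}}$ (using $(1+t)^{2a-1}\geq t^{2a-1}$ and absorbing the constant contribution into $C_1$); for $a=\frac12$ the exponent $2a-2=-1$ makes the antiderivative logarithmic, $\int_0^t(1+s)^{-1}\,ds=\ln(1+t)$, giving the polynomial rate $C_1e^{-C_2\ln(1+t)}$. This mirrors the argument of Corollary \ref{cor:superlinear-shift}.

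The genuinely delicate point is the threshold $a=\frac12$: it is exactly where the primitive of $(1+s)^{2a-2}$ switches from a power law to a logarithm, so it must be handled separately and cannot be obtained by formally letting $a\to\frac12$ in the $\frac12<a<1$ formula, where the constant $\frac{1}{2a-1}$ blows up. A secondary subtlety worth flagging is that one should \emph{re-derive} the upper bound from \eqref{sursol2} rather than quote \eqref{par-dessus} verbatim: the latter was established for $a>1$, where $\overline Q=\ddot A\,\frac{L_0}{2d}$, whereas here $\ddot A<0$ forces $\overline Q\equiv0$, so the two formulas differ by a bounded term (they share the same leading exponent $-\frac{c^2a^2}{4d(2a-1)}(1+t)^{2a-1}$, which is all that the stated rates require). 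Everything else is bookkeeping of signs so that subdominant and boundary terms are absorbed into $C_1,C_2$, together with the uniform bounds on $\varphi$.
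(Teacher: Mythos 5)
Your proof is correct and follows essentially the same route as the paper, which obtains item (i) by setting $\lambda=0$ in the lower bound \eqref{par-dessous} and item (ii) by "reproducing the arguments of Corollary \ref{cor:superlinear-shift}" with the sign change $\ddot A<0$ (hence $\overline Q\equiv 0$) and the logarithmic primitive at $a=\tfrac12$. Your remark that \eqref{par-dessus} cannot be quoted verbatim but must be rederived from \eqref{sursol2} is exactly the care the paper's phrase "reproducing the arguments" implicitly requires.
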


The above asserts that  a population hardly surviving ($\lambda=0$) is very sensitive to shifts of the magnitude $(1+t)^{1/2}$.

Last, we consider the case of a linear shift $A(t)=c(1+t)$ ($c\geq 0$). We show that  a population that survives in a fixed domain (thanks to favorable enough conditions) would still survive when $c$ is small enough, but would go to extinction when $c$ is large enough.

\begin{corollary}[Linear shift]\label{cor:linear-shift} Let the assumptions of Theorem \ref{th:sub-supersol} hold. Assume further that $L(t)=L_0$ for some $L_0>0$ and $A(t)=c(1+t)$ for some $c\geq 0$, and that \eqref{cond-survie-fixed} holds. Define
$$
c^*:=2\sqrt{-\lambda d}>0.
$$

Then, if $0\leq c <c^*$, the solution locally tends to infinity at large times. Survival (but not necessarily explosion) still occurs if $c=c^*$. On the other hand, if $c>c^*$, the solution uniformly goes to extinction at large times.
\end{corollary}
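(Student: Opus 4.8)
The plan is to specialize the two-sided bounds of Theorem \ref{th:sub-supersol} to the present data and then simply read off the sign of the exponential rate. Since $L(t)\equiv L_0$ and $A(t)=c(1+t)$, we have $\dot L\equiv\ddot L\equiv 0$, $\dot A\equiv c$ and $\ddot A\equiv 0$; hence, exactly as recorded at the beginning of subsection \ref{ss:shift} (this is the case $a=1$), all four auxiliary quantities vanish: $\overline Q\equiv\underline Q\equiv 0$ and $\overline P\equiv\underline P\equiv 0$. Moreover $\int_0^t\frac{L_0^2}{L^2(s)}\,ds=t$, the prefactor $(L_0/L(t))^{1/2}$ equals $1$, the term $\frac{\dot L(t)L(t)}{4dL_0^2}y^2$ vanishes, and $\frac{\dot A(t)L(t)}{2dL_0}y=\frac{c}{2d}y$.

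Plugging these into \eqref{sursol2} and \eqref{soussol2}, the time integral in the exponent collapses to $rt+\int_0^t\big(-\frac{c^2}{4d}-(\lambda+r)\big)\,ds=-\big(\lambda+\frac{c^2}{4d}\big)t$, so that for every $t>0$ and $0<y<L_0$,
$$
a\,\varphi(t,y)\exp\!\left(-\Big(\lambda+\tfrac{c^2}{4d}\Big)t-\tfrac{c}{2d}y\right)\le v(t,y)\le b\,\varphi(t,y)\exp\!\left(-\Big(\lambda+\tfrac{c^2}{4d}\Big)t-\tfrac{c}{2d}y\right).
$$
The whole dichotomy thus hinges on the sign of $\lambda+\frac{c^2}{4d}$, which changes precisely as $c$ crosses $c^*=2\sqrt{-\lambda d}$ (well defined thanks to \eqref{cond-survie-fixed}): indeed $\lambda+\frac{c^2}{4d}<0\iff c<c^*$.

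For the extinction regime $c>c^*$, I would use the upper bound together with the normalization $\Vert\varphi\Vert_\infty=1$ and $y>0$ to obtain $\Vert v(t,\cdot)\Vert_{L^\infty(0,L_0)}\le b\,e^{-(\lambda+c^2/(4d))t}$, which tends to $0$ uniformly since the coefficient is positive. For the regimes $c\le c^*$ I would instead exploit the lower bound on a compact subinterval $[\ep,L_0-\ep]$. The only genuinely non-algebraic point is to secure a time-uniform positive lower bound $\varphi(\tau,y)\ge m_\ep>0$ for all $\tau\in\R$ and $y\in[\ep,L_0-\ep]$; this follows from the continuity, strict positivity on $(0,L_0)$ and $T$-periodicity of $\varphi$, since a positive continuous $T$-periodic function attains a positive minimum on a compact interval over one period. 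Bounding also $e^{-cy/(2d)}\ge e^{-cL_0/(2d)}$ then gives $\min_{\ep\le y\le L_0-\ep}v(t,y)\ge a\,m_\ep\,e^{-cL_0/(2d)}\,e^{-(\lambda+c^2/(4d))t}$. When $c<c^*$ the coefficient is negative and this minimum blows up, yielding local convergence to $+\infty$; when $c=c^*$ the coefficient vanishes and the bound is a positive constant, yielding survival without explosion. This covers all three claimed regimes, and as every step is an explicit specialization of Theorem \ref{th:sub-supersol}, I do not anticipate any real obstacle beyond the uniform positivity of $\varphi$ noted above.
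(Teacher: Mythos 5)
Your proof is correct and follows essentially the same route as the paper: the paper's own proof simply invokes the bounds \eqref{par-dessus} and \eqref{par-dessous} (which are exactly the specializations of Theorem \ref{th:sub-supersol} that you re-derive) with $a=1$, and reads off the sign of $\lambda+\frac{c^2}{4d}$ as $c$ crosses $c^*$. The uniform positive lower bound on $\varphi$ over compact subintervals, which you make explicit, is the same ingredient hidden in the constant $C$ of \eqref{par-dessous}.
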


\begin{proof} It suffices to use \eqref{par-dessus} and \eqref{par-dessous} in the case $a=1$. 
\end{proof}

\section{Numerical approach}\label{s:numerics}

In this section, we implement a numerical scheme to approximate the solution of the evolution problem \eqref{eq} in moving domains, with the goal of exploring various domain evolution types and their effects on population survival or extinction. Although transforming the problem onto a fixed reference domain is a possible approach, it leads to highly time and space dependent coefficients, which complicate both analysis and numerical implementation (particularly in higher dimensions). To avoid these difficulties, we work directly on the moving domain and employ the stabilized  space-time finite element method introduced in \cite{Moore_2018}. This approach simultaneously discretizes space and time,  reformulates  the problem as a diffusion-convection-reaction system in a non-cylindrical space-time domain, with the time derivative interpreted as a convection term in the extended space-time framework.


\subsection{Weak formulation}\label{ss:weak}

For some $T>0$, we consider the bounded and Lipschitz space-time domain
$$
Q:=(0,T)\times \Omega(t)\subset\R^2, \quad \Omega(t):=(A(t),A(t)+L(t)).
$$
The boundary of $Q$ is divided into three parts: the lateral boundary  $\Sigma=((0,T)\times \{A(t)\})\cup((0,T)\times\{A(t)+L(t)\})$, the bottom boundary  $\Sigma_0=\{0\}\times (A(0),A(0)+L(0))$ and the upper boundary  $\Sigma_T= \{T\}\times (A(T),A(T)+L(T))$. 

Let us define the Sobolev spaces
\begin{equation*}
H^{1,0}(Q):=\{u\in L^2(Q): \, u_{x}\in L^2(Q)\},
\end{equation*}
and 
\begin{equation*}
H^{1,1}(Q):=\{v\in L^2(Q): \, v_{x}\in L^2(Q), v_{t}\in L^2(Q)
\}.
\end{equation*}
For later purpose, let us consider,  on the domain $Q$,  the problem
\begin{equation}\label{eq_space_time}
\begin{cases}
u_t-du_{xx}-R(t,x)u=f,  &\quad \mbox{in} \,\,Q,\vspace{5pt}\\
u(t,x)=0, & \quad \mbox{on}\,\, \Sigma\cup \Sigma_0,
\end{cases}
\end{equation}
where 
$f$ is a given source function  in $L^2(Q)$ and 
\begin{equation} 
R(t,x)=r-\frac{\alpha(t)}{2}\left(x-A(t)-\beta(t)L(t)\right)^2.
\end{equation}
The space-time variational formulation of \eqref{eq_space_time} requires to find $u\in  H^{1,0}_{0,\underline{0}}(Q)$ such that 
\begin{equation}\label{weakPb}
-\int_Q u v_t \,dx dt +d\int_Q u_x v_x\,dx dt-\int_Q R(t,x)uv\,dxdt=\int_{Q} f v\,dx dt,\; \forall  v\in H^{1,1}_{0,\overline{0}}(Q),
\end{equation}
where the trial and test  spaces are  defined by
\begin{equation*}
    H^{1,0}_{0,\underline{0}}(Q):=\{u\in H^{1,0}(Q): u=0 \, \mbox{ on }\,\Sigma,  \, \mbox{ and }\, u=0  \mbox{ on } \,\Sigma_0  
\},
\end{equation*}
and 
\begin{equation*}
    H^{1,1}_{0,\overline{0}}(Q):=\{v\in H^{1,1}(Q): v=0 \, \mbox{ on }\,\Sigma,  \, \mbox{ and }\, v=0  \mbox{ on } \,\Sigma_T  
\}.
\end{equation*}

\begin{proposition}[Well-posedness]
Problem \eqref{weakPb} has a unique solution. 
\end{proposition}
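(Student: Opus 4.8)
The plan is to cast \eqref{weakPb} as the abstract Petrov--Galerkin problem ``find $u\in U:=H^{1,0}_{0,\underline{0}}(Q)$ such that $b(u,v)=\ell(v)$ for all $v\in V:=H^{1,1}_{0,\overline{0}}(Q)$'', where $b(u,v)$ is the left-hand side of \eqref{weakPb} and $\ell(v):=\int_Q fv$, and to invoke the Banach--Ne\v{c}as--Babu\v{s}ka theorem (the generalized Lax--Milgram framework). Since the trial space $U$ and the test space $V$ are genuinely different, this is the natural tool: it yields existence, uniqueness and the a priori bound $\|u\|_U\le \gamma^{-1}\|f\|_{L^2(Q)}$ as soon as three conditions hold, namely (i) boundedness of $b$ and $\ell$, (ii) an inf--sup estimate $\inf_{u\in U}\sup_{v\in V} b(u,v)/(\|u\|_U\|v\|_V)\ge\gamma>0$, and (iii) the test-injectivity property that $b(u,v)=0$ for every $u\in U$ forces $v=0$.

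Conditions (i) and (iii) I expect to be routine. For boundedness, each of the three terms of $b$ is estimated by Cauchy--Schwarz, using that $R$ is bounded on the bounded domain $Q$ (a consequence of the continuity and $T$-periodicity of $\alpha,\beta$ together with the $C^2$ regularity of $A,L$), whence $|b(u,v)|\le C\|u\|_U\|v\|_V$ and $|\ell(v)|\le\|f\|_{L^2(Q)}\|v\|_V$. For (iii), if $v\in V$ satisfies $b(u,v)=0$ for all $u\in U$, then testing first with $u$ smooth and compactly supported in the interior of $Q$ and integrating the diffusion term by parts reads off that $v$ solves, weakly, the adjoint backward equation $-v_t-dv_{xx}-Rv=0$; since $v\in V$ already carries the homogeneous lateral and terminal traces $v=0$ on $\Sigma$ and on $\Sigma_T$, a backward-in-time energy estimate combined with Gr\"onwall's lemma propagates the terminal datum and forces $v\equiv 0$.

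The hard part will be the inf--sup condition (ii). Given $u\in U$, the natural candidate test function is the backward time-integral $v(t,x):=\int_t^{T} u(s,x)\,ds$, which vanishes on $\Sigma_T$ and satisfies $v_t=-u$, so that $-\int_Q u\,v_t=\|u\|_{L^2(Q)}^2$ while the diffusion contribution $\int_Q u_x v_x\ge 0$; this controls the $L^2$ part of $\|u\|_U$ with $\|v\|_V\lesssim\|u\|_U$. Recovering control of $\|u_x\|_{L^2(Q)}$ is then obtained by combining this choice with the coercivity of the diffusion term and absorbing the sign-indefinite reaction term $\int_Q Ruv$ via Young's inequality, using the $L^\infty$ bound on $R$. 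The genuine obstacle, and exactly the loss of boundary regularity alluded to after \eqref{weakPb}, is the \emph{moving lateral boundary} $\Sigma$: because $\Omega(t)$ varies with $t$, for fixed $x$ the time-slice $\{s:(s,x)\in Q\}$ is a proper subinterval of $(t,T)$, so the naive integral above need neither vanish on $\Sigma$ nor lie in $V$.

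To circumvent this, I would prove the inf--sup not on $Q$ directly but after the change of variables \eqref{u-v} mapping $Q$ onto the fixed cylinder $(0,T)\times(0,L_0)$. This map is $C^2$ and bi-Lipschitz, hence preserves the spaces $U$, $V$ and their norms up to equivalence; on the cylinder the boundary is vertical, so the backward time-integral is an admissible element of the (transported) test space, and the transported coefficients, although $t$- and $x$-dependent, remain bounded and measurable, which is all the argument needs. One thereby obtains a constant $\gamma>0$ depending only on $d$, $T$, $L_0$ and the coefficient bounds, and transfers it back to $Q$ by norm equivalence. With (i)--(iii) in hand, the Banach--Ne\v{c}as--Babu\v{s}ka theorem delivers the unique solution of \eqref{weakPb} together with its stability estimate.
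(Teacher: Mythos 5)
Your reduction to the fixed cylinder through the change of variables \eqref{u-v} is correct and coincides with the first step of the paper's proof; the gap lies in what you do next. The paper concludes by citing the classical theory (Ladyzhenskaya's Theorems 3.1 and 3.2, whose proofs rest on a Galerkin construction with energy estimates for existence and a duality argument for uniqueness), whereas you propose the Banach--Ne\v{c}as--Babu\v{s}ka theorem. Its decisive hypothesis --- the inf-sup estimate for $b$ on $H^{1,0}_{0,\underline{0}}\times H^{1,1}_{0,\overline{0}}$ with their natural norms --- is \emph{false}, already on a fixed cylinder with $d=1$ and $R\equiv 0$ (adding a bounded reaction term changes nothing). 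Indeed, on $Q_0=(0,T)\times(0,\pi)$ take $u_k(t,x)=\sin(k^2t)\sin(kx)$, which lies in the trial space and satisfies $\|u_k\|_{H^{1,0}}\sim k$. For any test function, written as $v=\sum_m v_m(t)\sin(mx)$, one has $|\int_{Q_0}u_k v_t|\le\|u_k\|_{L^2}\|v_t\|_{L^2}\le C\|v\|_{H^{1,1}}$, while
\begin{equation*}
\int_{Q_0}\partial_xu_k\,\partial_xv\,dx\,dt=\frac{\pi k^2}{2}\int_0^T\sin(k^2t)\,v_k(t)\,dt
=\frac{\pi}{2}\left(v_k(0)+\int_0^T\cos(k^2t)\,v_k'(t)\,dt\right)
\end{equation*}
after an integration by parts in time using $v_k(T)=0$; since $|v_k(0)|\le\sqrt{T}\,\|v_k'\|_{L^2(0,T)}$, this is bounded by $C\|v_t\|_{L^2(Q_0)}$ \emph{uniformly in $k$}. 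Hence $\sup_v b(u_k,v)/\|v\|_{H^{1,1}}\le C$ while $\|u_k\|_{H^{1,0}}\sim k$, so the inf-sup ratio decays like $1/k$. The failure is structural, not technical: BNB well-posedness is \emph{equivalent} to unique, stable solvability for \emph{every} $\ell\in V'$, whereas $u_k$ is itself the solution attached to $\ell_k:=b(u_k,\cdot)\in V'$ and $\|u_k\|_{U}\gtrsim k\,\|\ell_k\|_{V'}$, so no stability constant can exist. The proposition concerns only functionals of the form $\ell(v)=\int_Q fv$ with $f\in L^2(Q)$, a strictly smaller class, and any proof must exploit that structure. This is also why your sketch of step (ii) cannot be completed: with $v=\int_t^Tu\,ds$ the diffusion term equals $\frac{d}{2}\,\|\int_0^Tu_x(s,\cdot)\,ds\|_{L^2}^2$, which sees only the time average of $u_x$, and testing with $u$ itself is forbidden since $u\notin V$; by the counterexample, no other choice of test function can repair this.

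What is salvageable: your backward time-integral (admissible, as you rightly note, only after flattening the lateral boundary) is precisely the classical device for \emph{uniqueness} --- if $b(u,v)=0$ for all $v$, testing with $v_\tau(t,x)=\int_t^\tau u(s,x)\,ds$ (extended by $0$ for $t\ge\tau$) gives $\|u\|^2_{L^2((0,\tau)\times\Omega)}\le\|R\|_\infty\,\tau\,\|u\|^2_{L^2((0,\tau)\times\Omega)}$, and stepping over intervals of length $\tau<1/\|R\|_\infty$ forces $u\equiv 0$. For \emph{existence} you need a tool requiring coercivity of $b$ on the diagonal of the test space measured only in the \emph{trial} norm: after the exponential substitution making $-R\ge\rho>0$ (as in \eqref{parabolic_trick}), one has $b(v,v)\ge\frac12\|v(0,\cdot)\|_{L^2}^2+\min(d,\rho)\|v\|_{H^{1,0}}^2$ for $v\in V$, and Lions' projection theorem (equivalently, a Galerkin argument) then produces a solution for every $f\in L^2(Q)$ --- but not for every $\ell\in V'$, consistently with the counterexample. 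That is exactly the content of the theorems the paper invokes after performing the same change of variables, so the shortest correct completion of your argument is to replace step (ii) by that citation and keep your test function as the uniqueness proof.
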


\begin{proof}
We apply the same change of variables as in the previous section, see  \eqref{eq-v}, transforming  our problem into an equivalent one posed on the fixed spatial domain $
Q_0=(0,T)\times (0,L(0))$  whose boundary consists of the initial time boundary   $\widehat\Sigma_0=\{0\}\times(0,L(0))$, the lateral spatial boundary $\widehat\Sigma=((0,T)\times\{0\})\cup (0,T)\times\{L(0)\})) $ and the final time boundary $\widehat\Sigma_T=\{T\}\times(0,L(0))$. This is achieved by the  change of variable  $y=\frac{x-A(t)}{L(t)}L(0)$, where   $A$ and $L$ are in $C^2([0,+\infty))$ with  $L(t)>0$  for  all $t\in [0,T]$. Denoting the unknown function in the new coordinates by $u=u(t,y)$, the problem becomes that of finding $u$ satisfying:
$$
u\in H^{1,0}_{0,\underline{0}}(Q_0):=\{u\in L^2(Q_0): \, u_{y}\in L^2(Q_0),\, u=0\, \mbox{ on } \,\widehat\Sigma, \, \mbox{ and }  u=0\, \mbox{ on } \,\widehat\Sigma_0
\}
$$
such that 
\begin{equation}\label{Pb_weak}
a(u,v)=l(v),  \quad \forall v\in H^{1,1}_{0,\overline{0}}(Q_0),
\end{equation} 
where $H^{1,1}_{0,\overline{0}}(Q_0):=\{v\in L^2(Q_0): \, v_{y}, v_t\in L^2(Q_0),\,  v=0\, \mbox{ on } \,\widehat\Sigma, \, \mbox{ and }  v=0\, \mbox{ on } \,\widehat\Sigma_T
\}$,
\begin{multline*}
a(u,v):=\int_{Q_0} -u v_t\,dy dt +\int_{Q_0}\frac{dL^2(0)}{L^2(t)} u_y v_y\,dy dt\\-\int_{Q_0} \frac{\dot{A}(t)L(0)+y\dot{L}(t)}{L(t)}u_y v\,dy dt-\int_{Q_0} \tilde{R}(t,y)uv\,dy dt,
\end{multline*} 
with
\begin{equation*}
\tilde{R}(t,y)=r-\frac{\alpha(t) L^2(t)}{2L^2(0)}\left(y-\beta(t) L(0)\right)^2,
\end{equation*}
and 
\begin{equation*}
l(v)=\int_{Q_0} f v\,dy dt.
\end{equation*}
 The existence and uniqueness for this problem is well established, see \cite[Chapter III, Theorems 3.1 and 3.2]{lohwater2013boundary}.
\end{proof}

\subsection{Space-time finite element discretization} \label{ss:discretization}

 In this subsection, we aim to construct a \emph{continuous Galerkin finite element scheme} to numerically approximate the solution of the problem
\begin{equation}\label{NonHomog_eq_space_time}
\begin{cases}
u_t - d u_{xx} - R(t,x) u = 0, & \quad \text{in } Q, \vspace{5pt}\\
u = 0, & \quad \text{on } \Sigma, \\
u = u_0, & \quad \text{on } \Sigma_0,
\end{cases}
\end{equation}
where the non-homogeneous Dirichlet condition $u_0$ is assumed to satisfy \( u_0 \in H^1(\Sigma_0) \), and its tangential derivative (namely $\partial _x u_0$) belongs to \( H^{1/2}(\Sigma_0) \). In the sequel we denote $u_0^*$ the function defined on $\partial Q$ by $u_0^*:=\mathbf{1}_{\Sigma_0}u_0$, so that $u_0^*\in H^1(\partial Q)$. 

We would like to perform a lifting $g=g(t,x)$ of the boundary condition $u_0^*$. It is known that the regularity of $g$ depends on the regularity of the domain. In \cite{Grisvard1985}, Grisvard showed that if the boundary \( \partial Q \) is of class \( \mathcal{C}^{m-1,1} \), then the lifting belongs to \( H^k(Q) \) for all \( k \leq m \). In particular, for \( m = 2 \), we have \( g \in H^2(Q) \). However, in our setting, the moving spatial domain is only Lipschitz and, in this case, the existence of a $H^2$ lifting requires a compatibility condition, see \cite[Theorem 3]{Geymonat2000} by Geymonat and Krasucki, that, in our setting, reads  as $\partial _x u_0 \in H^{1/2}(\Sigma _0)$, which we have precisely assumed.

Hence, from \cite[Theorem 3]{Geymonat2000}, we are equipped with a lifting function \( g \in H^2(Q) \) such that \( \gamma_0(g) = u_0^* \). Setting \( u = w + g \), we transform problem \eqref{NonHomog_eq_space_time} into the following equivalent problem for \( w \):
\begin{equation}\label{eq_space_time_homogeneBis}
\begin{cases}
w_t - d w_{xx} - R(t,x) w = f, & \quad \text{in } Q, \vspace{5pt}\\
w = 0, & \quad \text{on } \Sigma, \\
w = 0, & \quad \text{on } \Sigma_0,
\end{cases}
\end{equation}
where the right-hand side is given by  
\[
f = -\partial_t g + d\, \partial_{xx} g + R g \in L^2(Q).
\]

\begin{remark}
Problems of this nature are often approached by decoupling time and space in the discretization process. A typical example is the use of an Euler scheme in time combined with finite elements in space. However, when dealing with time-dependent domains, this approach becomes unsuitable.
To overcome this limitation, we discretize both time and space simultaneously by considering the problem within the space-time domain $Q$. In this setting, the time derivative is treated as a convection term in the time direction, resulting in a diffusion-convection-reaction equation. Such problems are well-known for their numerical instability when using conventional schemes.
Following the stabilization approach introduced  by Hughes {\it et al.} in \cite{Hughes1982} for convection-diffusion-reaction problems, where upwind test functions are used in the context of the Stream Upwind Petrov-Galekin (SUPG) method, Moore adapted these ideas in \cite{Moore_2018}  to the context of parabolic homogeneous initial-boundary value problem  on moving domains. Specifically, Moore proposes using test functions of the form \(v_h + \theta h \partial_t v_h \), where \( \theta > 0 \) is a stabilization parameter to be specified. 
\end{remark}

In the following,  we parallel  the method described in \cite{Moore_2018} to our specific problem \eqref{eq}. 
The first step consists in defining a triangulation  $\mathcal{K}_h$ of the space-time domain $Q$, into non-degenerate triangles.  For each $K\in \mathcal{K}_h$, let  $h_K$ denote  the diameter of the element,  and define the global mesh size $h$  as 
$$
h:=\max \{ h_K:  K\in\mathcal{K}_h \} .  
$$
We further assume that the  triangulation is quasi-uniform,  i.e. there is $C>0$ such that 
\begin{equation}\label{quasi-uniform estimate}
h_K\leq h\leq C h_K, \quad \forall K\in \mathcal{K}_h.
\end{equation}  
Let $K_i, K_j\in \mathcal{K}_h$ be two neighboring triangles, and   consider the interior facet 
\begin{equation*}
F_{ij}:=\overline{K_i}\cap \overline{K_j}.
\end{equation*}
We   define $\mathcal{F}_I$ as the set of all interior facets of $\mathcal{K}_h$,  namely
\begin{equation*}
\mathcal{F}_I:=\left(\bigcup_{i \in I} \partial K_i \right) \backslash \partial Q.
\end{equation*}
At this point, we introduce the discrete space-time space $V_{0h}$. To do  this,  let  $\mathbb{P}_2$ denote  the set of  polynomials of degree less than  or equal to $2$, and define the space  $V_h$ as   
\begin{equation*}
V_h:=\left\{v_h\in C^0(\overline{Q}): v_h\vert_K\in\mathbb{P}_2(K),\,  \forall K\in\mathcal{K}_h \right\}.
\end{equation*}    
We then define the discrete space-time space $V_{0h}$ as    
\begin{equation*}
V_{0h}:=V_h \cap H^{1,1}_{0,\underline{0}}(Q).
\end{equation*}
Before deriving the  finite element scheme,  we recall below some notation and jump properties introduced in \cite{Moore_2018}, which will be used throughout the formulation. 

\medskip

\noindent {\bf Notations.} Denote ${\bf n}_i=(n_{i,x},n_{i,t})^\intercal $ the outer unit normal vector with respect to $K_i$.  For  a sufficiently smooth scalar function $v$, we will denote by $v_i$, $v_j$ the traces of the function $v$ on $F_{ij}\in \mathcal {F}_I$  an interior edge, and the jump across the interior edge of $\mathcal{F}_I$ is defined  by
\begin{equation*}
 \llbracket v \rrbracket:= v_i {\bf n}_i+v_j{\bf n}_j.
\end{equation*}
The jump in space direction is given by
\begin{equation*}
\llbracket v \rrbracket_x:= v_i {n}_{i,x}+v_j{n}_{j,x},
\end{equation*}
whereas the jump in time direction is  defined  by
\begin{equation*}
\llbracket v \rrbracket_t:= v_i {n}_{i,t}+v_j{ n}_{j,t}.
\end{equation*}
The average of a function on the interior edge  $F_{ij}$ is
\begin{equation*}
\{v\}:=\frac{1}{2}\left(v_i+v_j\right), 
\end{equation*}
and the upwind  value in  time direction is given by: 
\begin{equation*}
\{v\}^{up}:=\left\{
\begin{array}{ll}
v_i & \mbox{  for }  n_{i,t}\geq 0, \\
v_j & \mbox{ for }  n_{i,t}<0.
\end{array}
\right.
\end{equation*} 
Finally,  the downwind value in the time direction is given by  
\begin{equation*}
\{v\}^{down}:=\left\{
\begin{array}{ll}
v_j & \mbox{ for } n_{i,t}\geq 0, \\
v_i & \mbox{ for } n_{i,t}<0.
\end{array}
\right.
\end{equation*}  

\medskip

\noindent {\bf Jump properties.} The following properties of the jump of a product of functions  will play a crucial role in the subsequent analysis of the scheme:  if  $F_{ij}\in \mathcal{F}_I$ is an interior edge, and if  $u$ and  $v$ are sufficiently smooth  functions on the interface, there hold
\begin{equation}\label{jump_x}
\llbracket uv \rrbracket_x \,=\, \{u\}\llbracket v \rrbracket_x + \{v\}\llbracket u\rrbracket_x, 
\end{equation} 
\begin{equation}\label{jump_t}
\llbracket uv \rrbracket_t \,=\, \{u\}^{up}\llbracket v \rrbracket_t + \{v\}^{down}\llbracket u\rrbracket_t, 
\end{equation}
and 
\begin{equation}\label{v_up-v^2}
\{v\}^{up}\llbracket v \rrbracket_t -\frac 12 \llbracket v^2 \rrbracket_t \, = \,\frac 12 \lvert n_{i,t}\rvert\llbracket v \rrbracket^2.
\end{equation}

\medskip

 Let us emphasize again that the following is deeply inspired by \cite{Moore_2018} to which some computations are borrowed for completeness.
To  approximate the solution of \eqref{eq_space_time_homogeneBis}, let us consider the problem of finding  $w\in H^{1,1}_{0,\underline{0}}(Q)$ such that 
\begin{equation}\label{eq_faible_lift}
\int_Q \partial_t w \,v \,dxdt+\int_Q d \partial_x w\,\partial_x v \, dxdt- \int_Q R(t,x)w\,v\,dxdt=l(v),\quad \forall v\in H^{1,1}_{0,\underline{0}}(Q), 
\end{equation}
with 
\begin{equation*}
l(v):=-\int_Q \partial_t g\,v\,dxdt-\int_Q d\partial_x g\, \partial_x v\,dxdt+\int_Q R(t,x)g\,v\,dxdt.
\end{equation*}
Next, we use  test functions 
 of the form $v_h+\theta h\, \partial_t v_h$, where  $v_h\in V_{0h}$ is arbitrary   and   $\theta>0$ is a positive constant. Then the space-time variational  formulation  (\ref{eq_faible_lift})  reads as
 \begin{multline*}
\int_Q \partial_t w\left(v_h+\theta h \partial_t v_h\right)\,dxdt+\int_Q \Big( d\partial_{x}w\partial_x v_h-d\theta h\,\partial_{xx}w \partial_t v_h\\- R(t,x)w\left(v_h+\theta h \partial_t v_h\right)\Big)\,dxdt=l\left(v_h+\theta h \partial_t v_h\right).
\end{multline*}
Next, summing  on each element of $\mathcal{K}_h$ and integrating  by parts with respect to spatial direction we get
\begin{multline*}
-\int_Q \partial_{xx}w\,\, \partial_t v_h\, dxdt\\=\sum_{K\in \mathcal{K}_h} \int_K \partial_x w \,\, \partial_{xt}v_h\, dxdt-\sum_{F_{ij}\in \mathcal{F}_I} \int_{F_{ij}} \llbracket \partial_x w\,\,\partial_t v_h \rrbracket_x \,ds-\int_{\Sigma} n_x\left(\partial_x w\,\,\partial_t v_h\right)\,ds.
\end{multline*} 
By performing another integration by parts with respect to time and  since $\partial_x v_h =0$ on $\Sigma_0$, we obtain
\begin{multline*}
-\int_Q \partial_{xx}w\,\, \partial_t v_h\, dxdt=-\sum_{K\in \mathcal{K}_h} \int_K \partial_{tx} w \,\, \partial_{x}v_h\, dxdt+\sum_{F_{ij}\in \mathcal{F}_I} \int_{F_{ij}}\llbracket \partial_x w\,\,\partial_x v_h \rrbracket_t \, ds \\
+\int_{\Sigma\cup \Sigma_T} n_t\left(\partial_x w \,\,\partial_xv_h\right)\,ds -\sum_{F_{ij}\in \mathcal{F}_I} \int_{F_{ij}} \llbracket \partial_x w\,\,\partial_t v_h \rrbracket_x \,ds-\int_{\Sigma} n_x\left(\partial_xw\,\,\partial_t v_h\right)\,ds.
\end{multline*}
Considering the terms on the interior  facets $F_{ij} \in \mathcal{F}_I$, we use  properties on the jump of a product of functions, namely \eqref{jump_x} and \eqref{jump_t}, to obtain
\begin{align*}
\sum_{F_{ij}\in \mathcal{F}_I} \int_{F_{ij}}\llbracket \partial_x w\,\,\partial_x v_h \rrbracket_t - \llbracket \partial_x w\,\,\partial_t v_h \rrbracket_x \,ds=&\sum_{F_{ij}\in \mathcal{F}_I} \int_{F_{ij}} \Big(\{\partial_x w\}^{up}\llbracket \partial_x v_h \rrbracket_t+\{\partial_x v_h\}^{down}\llbracket \partial_x w\rrbracket_t\\
&-\{\partial_x w\}\llbracket \partial_t v_h\rrbracket_x -\{\partial_t v_h\}\llbracket \partial_x w\rrbracket_x \Big)\, ds.
\end{align*}  
 Assuming that the solution $w$ belongs to $H^2(Q)$ allows  us to further simplify those boundary terms since  the jumps  $\llbracket \partial_x w\rrbracket_x$ and $\llbracket \partial_x w\rrbracket_t$ are null.  Thus we have 
\begin{equation*}
\sum_{F_{ij}\in \mathcal{F}_I} \int_{F_{ij}}\left(\llbracket \partial_x w\,\,\partial_x v_h \rrbracket_t - \llbracket \partial_x w\,\,\partial_t v_h \rrbracket_x \right)\,ds=\sum_{F_{ij}\in \mathcal{F}_I} \int_{F_{ij}} \left(\{\partial_x w\}^{up}\llbracket \partial_x v_h \rrbracket_t-\{\partial_x w\}\llbracket \partial_t v_h\rrbracket_x\right) \,ds.
\end{equation*} 
Next, we show that $n_t \,\partial_x v_h-n_x\,\partial_t v_h=0$. First,  notice that we can rewrite the left side as $\nabla v_h \cdot\left(-n_x,n_t\right)^\intercal$. Now,  consider that $\left(-n_x,n_t\right)^\intercal$ is the rotation by an angle $\frac{\pi}{2}$ of the outer unit normal vector, so $\left(-n_x,n_t\right)^\intercal$ is a tangential vector on $\Sigma$. Since $v_h=0$ on $\Sigma$,   we have  $n_t \,\partial_x v_h-n_x\,\partial_t v_h=0$, resulting in the suppression of the boundary term          
\begin{equation*}
\int_\Sigma \partial_x w\left(n_t \,\partial_x v_h-n_x\,\partial_t v_h\right)\,ds=0. 
\end{equation*}  
Finally, assuming that $w\in H^2(Q)$ allows us to write $\llbracket \partial_tw\rrbracket =0$. Consequently,  it is harmless to add, for $\delta>0$, the consistent term
\begin{equation*}
\theta h \sum_{F_{ij}\in\mathcal{F}_I} \int_{F_{ij}}d\{\partial_x v_h\}\llbracket \partial_t w\rrbracket_x\, ds+ \delta \sum_{F_{ij}\in\mathcal{F}_I} \int_{F_{ij}}\llbracket \partial_t w\rrbracket_x\llbracket \partial_t v_h\rrbracket_x\,ds.
\end{equation*}

 \begin{remark} 
 The stabilization term $\theta h \, \partial_t w \, \partial_t v$
corrects instabilities caused by advection-dominance in the time direction, with $\theta$ controlling artificial diffusion and $h$ scaling it with the mesh size. A penalty term
$\delta \llbracket \partial_t w \rrbracket \llbracket \partial_t v \rrbracket$ enforces continuity of the time derivative across element interfaces, enhancing accuracy and convergence. These techniques follow the SUPG framework, as developed in \cite{Hughes1987} and \cite{Johnson1987}.
\end{remark}

Putting everything together, we are now in the position to  write the variational space-time finite element scheme: it consists in finding $w_h\in V_{0h}$ such that 
\begin{equation}\label{pb_discret}
a_h(w_h,v_h)=l(v_h),  \quad \forall v_h \in V_{0h},
\end{equation}
where
\begin{eqnarray*}     
a_h(w_h,v_h)&:=&\int_Q \partial_t w_h \left(v_h+\theta h\,\partial_t v_h\right)\,dxdt+\int_Q d\, \partial_x w_h\,\, \partial_x v_h \, dxdt \\&&-\int_Q R(t,x)\,w_h\left(v_h+\theta h \,\partial_t v_h\right)\, dxdt
-d\theta h\sum_{K\in \mathcal{K}_h} \int_K \partial_{tx} w_h\,\,\partial_{x}v_h\, dxdt\\&&+d\theta h\sum_{F_{ij}\in \mathcal{F}_I} \int_{F_{ij}} \{\partial_x w_h\}^{up}\llbracket \partial_x v_h \rrbracket_t-\{\partial_x w_h\}\llbracket \partial_t v_h\rrbracket_x\,ds
\\&&+d\theta h\int_{\Sigma_T} \partial_x w_h\,\,\partial_x v_h \,ds+\theta h \sum_{F_{ij}\in\mathcal{F}_I} \int_{F_{ij}}d\{\partial_x v_h\}\llbracket \partial_t w_h\rrbracket_x\, ds\\&&+\delta\,\sum_{F_{ij}\in\mathcal{F}_I} \int_{F_{ij}}\llbracket \partial_t w_h\rrbracket_x\llbracket \partial_t v_h\rrbracket_x\,ds,
\end{eqnarray*} 
and 
\begin{multline*}
l_h(v_h):=-\int_Q \partial_t g\left(v_h+\theta h\,\partial_t v_h\right)\,dxdt\\-\int_Q d\partial_x g\, \partial_x  \left(v_h+\theta h\,\partial_t v_h\right)\,dxdt+\int_Q R(t,x)g \left(v_h+\theta h\,\partial_t v_h\right)\,dxdt.
\end{multline*}
Now that our strategy is outlined, our next objective is to establish the well-posedness of the discrete problem.  

\begin{theorem}[Well-posedness]\label{th:discret well posedness}
Assume that $\theta$ is small enough. Then the  discrete problem \eqref{pb_discret} has a unique solution in $V_{0h}$.  
\end{theorem}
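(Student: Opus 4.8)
The plan is to exploit that $V_{0h}$ is finite dimensional, so that \eqref{pb_discret} is a square linear system; existence and uniqueness are therefore equivalent, and it suffices to prove injectivity, i.e. that $a_h(w_h,v_h)=0$ for all $v_h\in V_{0h}$ forces $w_h=0$. I would extract this from an energy estimate obtained by testing with $v_h=w_h$, showing that $a_h(w_h,w_h)$ controls $w_h$ up to terms that the evolution structure and the smallness of $\theta$ let me absorb.

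First I would compute $a_h(w_h,w_h)$. The convection-in-time term $\int_Q\partial_t w_h\,w_h$ equals, after writing it as $\tfrac12\int_Q\partial_t(w_h^2)$ and using the space-time divergence theorem together with $w_h=0$ on $\Sigma\cup\Sigma_0$ and the continuity of $w_h\in C^0(\overline Q)$, the nonnegative terminal contribution $\tfrac12\|w_h\|_{L^2(\Sigma_T)}^2$. The stabilization and diffusion terms directly give the nonnegative quantities $\theta h\|\partial_t w_h\|_{L^2(Q)}^2$ and $d\|\partial_x w_h\|_{L^2(Q)}^2$, and the penalty gives $\delta\sum_{F_{ij}}\int_{F_{ij}}\llbracket\partial_t w_h\rrbracket_x^2\,ds\ge 0$. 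The delicate part is the cluster of facet and $\Sigma_T$ terms carrying the factor $d\theta h$: here the two cross-terms $\mp d\theta h\{\partial_x w_h\}\llbracket\partial_t w_h\rrbracket_x$ cancel, and applying \eqref{v_up-v^2} to $v=\partial_x w_h$ rewrites the surviving upwind term as $\tfrac12\llbracket(\partial_x w_h)^2\rrbracket_t$ plus the nonnegative dissipation $\tfrac12|n_{i,t}|\llbracket\partial_x w_h\rrbracket^2$. Integrating the bulk term $-d\theta h\sum_K\int_K\partial_{tx}w_h\,\partial_x w_h=-\tfrac{d\theta h}{2}\sum_K\int_K\partial_t\big((\partial_x w_h)^2\big)$ by parts in time, the interior jumps-in-time cancel the $\tfrac12\llbracket(\partial_x w_h)^2\rrbracket_t$ contributions, and combining with the $\Sigma_T$ boundary term leaves nonnegative boundary integrals on $\Sigma_0$ and $\Sigma_T$, plus a single lateral remainder $-\tfrac{d\theta h}{2}\int_\Sigma n_t(\partial_x w_h)^2\,ds$ of indefinite sign.

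Next I would absorb the genuinely non-positive contributions. The lateral remainder has size $d\theta h\int_\Sigma(\partial_x w_h)^2$; a discrete trace inequality $\|\partial_x w_h\|_{L^2(\partial K)}^2\le C h_K^{-1}\|\partial_x w_h\|_{L^2(K)}^2$, valid on the quasi-uniform mesh \eqref{quasi-uniform estimate}, bounds it by $Cd\theta\|\partial_x w_h\|_{L^2(Q)}^2$, which is absorbed into $d\|\partial_x w_h\|_{L^2(Q)}^2$ precisely when $\theta$ is small; the same smallness, via a Young inequality, absorbs the SUPG reaction cross-term $\theta h\int_Q R\,w_h\,\partial_t w_h$ into $\theta h\|\partial_t w_h\|_{L^2(Q)}^2$ and $\|w_h\|_{L^2(Q)}^2$. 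This is exactly the role of the hypothesis on $\theta$. What then remains is the zeroth-order reaction $-\int_Q R\,w_h^2$, and since $R\le r$ I only obtain a Gårding-type inequality $a_h(w_h,w_h)\ge c\,\|w_h\|_h^2-C\|w_h\|_{L^2(Q)}^2$, where $\|w_h\|_h^2:=\tfrac12\|w_h\|_{L^2(\Sigma_T)}^2+\theta h\|\partial_t w_h\|_{L^2(Q)}^2+d\|\partial_x w_h\|_{L^2(Q)}^2+\delta\sum_{F_{ij}}\int_{F_{ij}}\llbracket\partial_t w_h\rrbracket_x^2\,ds$.

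The main obstacle is precisely this indefinite reaction: because $R$ may be positive (the survival regime, where $\lambda<0$), the form $a_h$ is genuinely not coercive, so $a_h(w_h,w_h)=0$ does not by itself force $w_h=0$ and a plain energy argument cannot close. To overcome this I would not rely on the test function $w_h$ alone but establish a discrete inf-sup (Babuška) estimate: given $w_h$, I would build an augmented test function $v_h\in V_{0h}$ that additionally controls $\|w_h\|_{L^2(Q)}$, for instance the nodal interpolant of the exponentially weighted field $e^{-\gamma t}w_h$ (equivalently, a discrete Gronwall argument in time made possible by the initial condition $w_h=0$ on $\Sigma_0$ and the terminal control $\tfrac12\|w_h\|_{L^2(\Sigma_T)}^2$). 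For $\gamma$ chosen from $\|R\|_{L^\infty(Q)}$ and $\theta$ small, this yields $\sup_{v_h}a_h(w_h,v_h)/\|v_h\|_h\ge\gamma_0\|w_h\|_h$ with $\gamma_0>0$, hence injectivity and, by the square-system reduction, the existence and uniqueness asserted for \eqref{pb_discret}. Controlling the interpolation error of the exponential weight, equivalently the commutator between the weight and the discrete time derivative, is the technical heart of the argument.
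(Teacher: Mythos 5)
Your energy computation is sound and, up to the reaction term, reproduces the paper's own coercivity argument: the square-system reduction in finite dimensions, the terminal contribution $\tfrac12\Vert w_h\Vert^2_{L^2(\Sigma_T)}$, the cancellation of the two cross terms carrying $\{\partial_x w_h\}\llbracket\partial_t w_h\rrbracket_x$, the use of \eqref{v_up-v^2}, and the absorption of the lateral remainder by a discrete trace inequality together with \eqref{quasi-uniform estimate} and the smallness of $\theta$ are exactly the steps in the paper. You are also right that, with $R$ allowed to be positive and large (the survival regime), testing with $w_h$ alone yields only a G\aa rding inequality and genuine coercivity in the norm \eqref{MeshDependentNorm} fails, since the defect $-C\Vert w_h\Vert^2_{L^2(Q)}$ cannot be absorbed (Poincar\'e in time would have to pass through $\theta h\Vert\partial_t w_h\Vert^2_{L^2(Q)}$, whose weight vanishes with $h$). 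The genuine gap is in your remedy: the discrete inf-sup estimate built on the nodal interpolant of $e^{-\gamma t}w_h$ is asserted, not proved. Since $e^{-\gamma t}w_h\notin V_{0h}$, the whole argument rests on interpolation and commutator estimates --- with every facet and stabilization term of $a_h$, with constants and a choice of $\gamma$ uniform in $h$ --- which you yourself call ``the technical heart'' and leave open. As written, the proposal therefore does not prove the theorem, and it is not clear the route closes without further hypotheses.

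The idea that dissolves the difficulty --- and it is the paper's --- is to apply your exponential weight to the \emph{unknown at the continuous level}, before any discretization. If $u$ solves \eqref{NonHomog_eq_space_time}, then $U=ue^{-ct}$ solves the same problem with $R$ replaced by $R-c$; since $R\le r$ on $Q$, choosing $c>r$ gives \eqref{parabolic_trick}, i.e.\ one may assume $-R\ge\rho>0$ from the outset. With this sign condition your own estimate closes with no inf-sup machinery: the reaction contributes $+\rho\Vert w_h\Vert^2_{L^2(Q)}$, which dominates the Young-inequality loss $\tfrac{\theta h}{2}\Vert R\Vert_{L^\infty(Q)}^2\Vert w_h\Vert^2_{L^2(Q)}$ coming from the SUPG cross term once $\theta$ is small, so $a_h$ is $V_{0h}$-elliptic for the norm \eqref{MeshDependentNorm}, and Lax--Milgram (or your elementary injectivity reduction) yields existence and uniqueness for \eqref{pb_discret}. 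In short, your instinct of exponential-in-time weighting is the right one, but applied to the discrete test function it creates precisely the interpolation obstruction you cannot close, whereas applied to the continuous unknown it is exact, costless, and reduces the theorem to the coercivity computation you already carried out.
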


We first need the following result.

\begin{lemma}
The space  \( V_{0h} \) equipped with
\begin{equation}\label{MeshDependentNorm}
    \begin{aligned}
\lvert\lvert v_h\rvert\rvert_h\,=\Big(\,&\lvert\lvert \partial_x v_h\rvert\rvert^2_{L^2(Q)}+\theta h \lvert\lvert \partial_t v_h\rvert\rvert^2_{L^2(Q)}+\lvert\lvert  v_h\rvert\rvert^2_{L^2(\Sigma_T)}+\theta h\lvert\lvert \partial_x v_h\rvert\rvert^2_{L^2(\Sigma_T)}\\
&+\theta h\sum_{F_{i,j}\in\mathcal{F}_I}\lvert\lvert \llbracket \partial_x v_h\rrbracket_t\rvert\rvert^2_{L^2(F_{i,j})}+\delta \sum_{F_{i,j}\in\mathcal{F}_I}\lvert\lvert \llbracket \partial_t v_h\rrbracket_x\rvert\rvert^2_{L^2(F_{i,j})}\Big)^{1/2},
\end{aligned}
\end{equation}
is a Hilbert space.
\end{lemma}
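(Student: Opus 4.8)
The plan is to exhibit an inner product on $V_{0h}$ whose induced norm is exactly $\lvert\lvert\cdot\rvert\rvert_h$, to check that this form is genuinely \emph{positive definite} (not merely positive semi-definite), and then to deduce completeness from finite-dimensionality. The natural candidate is obtained by polarizing each squared term in \eqref{MeshDependentNorm}: for $u_h,v_h\in V_{0h}$, set
\begin{align*}
\langle u_h, v_h\rangle_h := {} & \langle \partial_x u_h, \partial_x v_h\rangle_{L^2(Q)} + \theta h \langle \partial_t u_h, \partial_t v_h\rangle_{L^2(Q)} + \langle u_h, v_h\rangle_{L^2(\Sigma_T)} \\
& + \theta h \langle \partial_x u_h, \partial_x v_h\rangle_{L^2(\Sigma_T)} + \theta h \sum_{F_{ij}\in\mathcal{F}_I} \langle \llbracket\partial_x u_h\rrbracket_t, \llbracket\partial_x v_h\rrbracket_t\rangle_{L^2(F_{ij})} \\
& + \delta \sum_{F_{ij}\in\mathcal{F}_I} \langle \llbracket\partial_t u_h\rrbracket_x, \llbracket\partial_t v_h\rrbracket_x\rangle_{L^2(F_{ij})}.
\end{align*}
Since every element of $V_{0h}$ is continuous on $\overline Q$ and piecewise polynomial, its derivatives and their traces on $\Sigma_T$ and on the facets $F_{ij}$ are polynomials on bounded sets, so all these integrals are finite and the form is well-defined. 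It is plainly bilinear and symmetric, and satisfies $\langle v_h,v_h\rangle_h=\lvert\lvert v_h\rvert\rvert_h^2\geq 0$, i.e. it is positive semi-definite.

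The crux is positive definiteness, which is the step I expect to require the most care: since $\lvert\lvert\cdot\rvert\rvert_h$ controls only derivatives of $v_h$ in the bulk, it could a priori vanish on nonzero constants, so one must use the structure of $V_{0h}$ to rule this out. Suppose $\lvert\lvert v_h\rvert\rvert_h=0$. The first two (bulk) terms force $\partial_x v_h=0$ and, because $\theta h>0$, $\partial_t v_h=0$ almost everywhere in $Q$. As $v_h$ is continuous on $\overline Q$ and $Q=\{(t,x):0<t<T,\ A(t)<x<A(t)+L(t)\}$ is connected (here $A,L$ are continuous and $L>0$, so $Q$ is path-connected), $v_h$ must be constant on $\overline Q$. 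But $v_h\in V_{0h}\subset H^{1,1}_{0,\underline{0}}(Q)$ vanishes on the nonempty set $\Sigma_0$, whence this constant is $0$; that is, $v_h\equiv 0$. Consequently $\langle\cdot,\cdot\rangle_h$ is a genuine inner product and $\lvert\lvert\cdot\rvert\rvert_h$ a norm, the triangle inequality following for free from Cauchy--Schwarz.

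It then remains to establish completeness, and here the key observation is that $V_{0h}$ is \emph{finite-dimensional}: indeed $V_h$ is the space of continuous piecewise-$\mathbb{P}_2$ functions on the fixed triangulation $\mathcal{K}_h$ of $Q$, which has finite dimension, and $V_{0h}=V_h\cap H^{1,1}_{0,\underline{0}}(Q)$ is the subspace cut out by the homogeneous linear constraints $v_h=0$ on $\Sigma\cup\Sigma_0$. Every finite-dimensional inner product space is complete (all norms on it being equivalent), so $(V_{0h},\langle\cdot,\cdot\rangle_h)$ is a Hilbert space. In summary, the only non-routine point is the positive-definiteness argument, which hinges on the boundary condition on $\Sigma_0$ together with the connectedness of $Q$; once finite-dimensionality is noticed, completeness is immediate and the usual Cauchy-sequence verification needed in infinite dimensions is avoided entirely.
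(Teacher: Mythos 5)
Your proposal is correct and follows essentially the same route as the paper: both exhibit the obvious polarized scalar product, prove positive definiteness by noting that vanishing bulk derivatives together with the boundary condition $v_h=0$ on $\Sigma\cup\Sigma_0$ force $v_h\equiv 0$, and obtain completeness from the finite dimensionality of $V_{0h}$. Your write-up is merely more explicit on two points the paper leaves implicit (the connectedness of $Q$ behind the ``constant, hence zero'' step, and the polarization identity), which is a welcome clarification but not a different argument.
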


\begin{proof} 
 The application $\|\cdot\|_h$  is a norm on $V_{0h}$ (arising from an obvious scalar product). Indeed,  $\lvert\lvert v_h \rvert\rvert_h=0$   implies that  $\partial_x v_h=0$, $\partial_t v_h=0$,  $  v_h=0$ on $\Sigma_T$.  If all these terms are zero,  and since $v_h=0 $ on $\Sigma\cup\Sigma_0$, then $v_h=0$  throughout the domain $Q$. Therefore $(V_{0h},\|\cdot\|_h)$ is a Hilbert space, as it is  finite-dimensional. 
\end{proof}

\begin{remark} The mesh-dependent norm \eqref{MeshDependentNorm} explicitly includes the stabilization   to counteract numerical oscillations in the advection direction. Standard norms (such as those of $L^2$ or $H^1$) may not  handle optimal convergence rates due to these oscillations (see, for example, \cite{ErnGuermond2004} and \cite{Hughes1986}). Using this   $h$-dependent norm is crucial  for accurately approximating solutions in  advection-dominated problems, as it captures the stabilizing effects of the discretization, provides more consistent error estimates  and ensures the numerical solution remains stable under mesh refinement (see \cite{AinsworthOden2000}).    
 \end{remark}
 

Let us now  observe that if $u$ solves \eqref{NonHomog_eq_space_time},  then  $U(t,x):=u(t,x)e^{-ct}$ obviously solves
\begin{equation*}
\begin{cases}
U_t-d U_{xx}+(c-R(t,x))U= 0, &\quad \mbox{in} \,\,Q,\vspace{5pt}\\
U=0, & \quad \mbox{on}\,\, \Sigma,\\
U=u_0, & \quad \mbox{on} \,\, \Sigma_0. 
\end{cases}
\end{equation*}
Hence, recalling that  $R\leq r$ on $Q$, we deduce that,  up to a change of unknown function if necessary, we can  assume the existence of a constant  $\rho>0$ such that 
\begin{equation}\label{parabolic_trick}
0<\rho\leq - R(t,x),\; \forall (t,x)\in Q.
\end{equation}  
We are now in position to prove the $V_{0h}$-ellipticity of the bilinear form $a_h$.

\begin{lemma}  If $\theta>0$ is small enough, the bilinear form $a_h$ in   \eqref{pb_discret}  is $V_{0h}$-elliptic.
\end{lemma}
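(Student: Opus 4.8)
The plan is to establish ellipticity by testing the form against $v_h$ itself and proving $a_h(v_h,v_h)\geq c\,\|v_h\|_h^2$ for some $c>0$, provided $\theta$ is small. I would first invoke the parabolic trick \eqref{parabolic_trick}, so that the zeroth order contribution $-\int_Q R\,v_h^2\,dxdt\geq \rho\,\|v_h\|_{L^2(Q)}^2\geq 0$ is at our disposal. Then I would split $a_h(v_h,v_h)$ into volume terms, boundary terms on $\Sigma_T$, and interior facet terms, and try to recover, up to harmless remainders, each of the six squares appearing in \eqref{MeshDependentNorm}.

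For the genuinely positive contributions, diffusion gives directly $d\,\|\partial_x v_h\|^2_{L^2(Q)}$ and the stabilisation gives $\theta h\,\|\partial_t v_h\|^2_{L^2(Q)}$. The convective term $\int_Q \partial_t v_h\,v_h\,dxdt=\tfrac12\int_Q\partial_t(v_h^2)\,dxdt$ is integrated by parts in the space-time domain; since $v_h=0$ on $\Sigma\cup\Sigma_0$, only the top boundary survives and yields $\tfrac12\|v_h\|^2_{L^2(\Sigma_T)}$. Similarly I would integrate $-d\theta h\sum_K\int_K\partial_{tx}v_h\,\partial_x v_h\,dxdt=-\tfrac{d\theta h}{2}\sum_K\int_K\partial_t\big((\partial_x v_h)^2\big)\,dxdt$ by parts in time on each element; using $\partial_x v_h=0$ on $\Sigma_0$, this produces the interior facet jumps $-\tfrac{d\theta h}{2}\sum_{F_{ij}}\int_{F_{ij}}\llbracket(\partial_x v_h)^2\rrbracket_t\,ds$, a term $-\tfrac{d\theta h}{2}\|\partial_x v_h\|^2_{L^2(\Sigma_T)}$ that combines with the $\Sigma_T$ term already built into $a_h$ to leave the positive $+\tfrac{d\theta h}{2}\|\partial_x v_h\|^2_{L^2(\Sigma_T)}$, and a remainder $-\tfrac{d\theta h}{2}\int_\Sigma(\partial_x v_h)^2 n_t\,ds$ on the lateral boundary.

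On the interior facets, the two occurrences of $\{\partial_x v_h\}\llbracket\partial_t v_h\rrbracket_x$ coming from the fifth term and from the seventh term of $a_h$ cancel exactly when $w_h=v_h$. The remaining facet contribution $d\theta h\sum_{F_{ij}}\int_{F_{ij}}\{\partial_x v_h\}^{up}\llbracket\partial_x v_h\rrbracket_t\,ds$ is then added to the facet jumps $-\tfrac{d\theta h}{2}\llbracket(\partial_x v_h)^2\rrbracket_t$ produced above, and the key algebraic identity \eqref{v_up-v^2} applied with $v=\partial_x v_h$ turns this sum into the nonnegative quantity $\tfrac{d\theta h}{2}\sum_{F_{ij}}\int_{F_{ij}}|n_{i,t}|\,\llbracket\partial_x v_h\rrbracket^2\,ds$. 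Since $|n_{i,t}|\leq 1$ gives $|n_{i,t}|\,\llbracket\partial_x v_h\rrbracket^2\geq(\llbracket\partial_x v_h\rrbracket_t)^2$, this controls $\tfrac{d\theta h}{2}\sum_{F_{ij}}\|\llbracket\partial_x v_h\rrbracket_t\|^2_{L^2(F_{ij})}$, while the penalty term supplies $\delta\sum_{F_{ij}}\|\llbracket\partial_t v_h\rrbracket_x\|^2_{L^2(F_{ij})}$ directly.

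It then remains to absorb the indefinite remainders: the lateral term $-\tfrac{d\theta h}{2}\int_\Sigma(\partial_x v_h)^2 n_t\,ds$ and the cross term $-\theta h\int_Q R\,v_h\,\partial_t v_h\,dxdt$. I would bound each by Young's inequality together with the inverse and discrete trace inequalities valid on the quasi-uniform mesh \eqref{quasi-uniform estimate}, for instance $h\|\partial_x v_h\|^2_{L^2(\Sigma)}\leq C\|\partial_x v_h\|^2_{L^2(Q)}$. Each remainder is thereby controlled by $C\theta\,\|\partial_x v_h\|^2_{L^2(Q)}$ plus, via \eqref{parabolic_trick} and the Poincaré inequality, a small multiple of $\rho\|v_h\|^2_{L^2(Q)}$ and of $\theta h\|\partial_t v_h\|^2_{L^2(Q)}$, so that choosing $\theta$ small leaves a fixed positive fraction of every square in \eqref{MeshDependentNorm}. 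The main obstacle is precisely the lateral remainder on $\Sigma$: it carries no definite sign, because $n_t$ switches sign according to whether the domain expands or contracts, so that — unlike on a cylindrical domain — it cannot simply be discarded and must be tamed through the trace-inverse estimate; the delicate bookkeeping is then to check that a single smallness threshold on $\theta$ simultaneously absorbs all the remainders into the available positive terms.
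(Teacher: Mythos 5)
Your proposal is correct and takes essentially the same route as the paper's proof: test with $v_h$, invoke the parabolic trick \eqref{parabolic_trick}, integrate by parts element-wise to produce the $\Sigma_T$, lateral and facet contributions, use the exact cancellation of the $\{\partial_x v_h\}\llbracket \partial_t v_h\rrbracket_x$ terms, apply the identity \eqref{v_up-v^2} together with $\lvert n_{i,t}\rvert\geq n_{i,t}^2$, and absorb the indefinite lateral term $-\frac{d\theta h}{2}\lVert \partial_x v_h\rVert^2_{L^2(\Sigma)}$ via the inverse/trace inequality and quasi-uniformity \eqref{quasi-uniform estimate} for $\theta$ small. The only (harmless) deviation is your treatment of the reaction cross term $-\theta h\int_Q R\,v_h\,\partial_t v_h\,dxdt$ by Young's inequality against the positive volume terms, where the paper instead pushes it to the boundary by the divergence theorem to gain the nonnegative contribution $\frac{\rho\theta h}{2}\lVert v_h\rVert^2_{L^2(\Sigma_T)}$.
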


\begin{proof}
For   $v_h\in V_{0h}$, we have
\begin{align*}
a_h(v_h,v_h)=&\int_Q v_h\,\partial_t v_h\,dxdt+\theta h \norm{\partial_t v_h}{L^2(Q)}^2+d\norm{\partial_x v_h}{L^2(Q)}^2\\
&-\int_Q R(t,x)\,v_h\left(v_h+\theta h \,\partial_t v_h\right)\, dxdt-d\theta h\sum_{K\in \mathcal{K}_h} \int_K \partial_{tx} v_h\,\,\partial_{x}v_h\, dxdt\\
&+d\theta h\sum_{F_{ij}\in \mathcal{F}_I} \int_{F_{ij}} \{\partial_x v_h\}^{up}\llbracket \partial_x v_h \rrbracket_t\,ds+d\theta h\norm{\partial_x v_h}{L^2(\Sigma_T)}^2\\& +\delta\sum_{F_{ij}\in \mathcal{F}_I} \int_{F_{ij}}{\llbracket \partial_t v_h\rrbracket_x}^2\, ds.
\end{align*}
First,  using the estimate \eqref{parabolic_trick} and  applying the divergence theorem, we get:
\begin{align*}
a_h(v_h,v_h)\,\geq\, &\frac12\int_{\partial Q} v_h^2\,n_t\,dxdt+\theta h \norm{\partial_t v_h}{L^2(Q)}^2+d\norm{\partial_x v_h}{L^2(Q)}^2+\rho\norm{v_h}{L^2(Q)}^2\\
&+\frac{\rho\theta h}{2} \int_{\partial Q} v_h^2\,n_t\,dxdt-\frac{d\theta h}{2}\sum_{K\in \mathcal{K}_h} \int_{\partial K} \partial_{x} v_h^2n_{i,t}\, dtdx\\
&+d\theta h\sum_{F_{ij}\in \mathcal{F}_I} \int_{F_{ij}} \{\partial_x v_h\}^{up}\llbracket \partial_x v_h \rrbracket_t\,ds+d\theta h\norm{\partial_x v_h}{L^2(\Sigma_T)}^2 \\& +\delta\sum_{F_{ij}\in \mathcal{F}_I} \norm{\llbracket \partial_t v_h\rrbracket_x}{L^2(F_{ij})}^2.
\end{align*}
Rewriting the boundary terms  such that  $\partial Q =\Sigma\cup\Sigma_0\cup\Sigma_T$ and  $\mathcal{F}_I=(\cup_{K\in \mathcal{K}_h}\partial K)\backslash\partial Q$ with interior facet $F_{ij}\subset \mathcal{F}_I$ and using $v_h=0$ on $\Sigma\cup \Sigma_0$ yields
\begin{align*}
a_h(v_h,v_h)\,\geq\, &\frac12\norm{v_h}{L^2(\Sigma_T)}^2+\theta h \norm{\partial_t v_h}{L^2(Q)}^2+d\norm{\partial_x v_h}{L^2(Q)}^2+\rho\norm{v_h}{L^2(Q)}^2+\frac{\rho\theta h}{2}  \norm{v_h}{L^2(\Sigma_T)}^2\\
&-\frac{d\theta h}{2}\int_{\Sigma}n_{t}\partial_{x} v_h^2\,ds+d\theta h\sum_{F_{ij}\in \mathcal{F}_I} \int_{F_{ij}} \{\partial_x v_h\}^{up}\llbracket \partial_x v_h \rrbracket_t-\frac12 \llbracket \partial_{x} v_h^2\rrbracket_t\,ds\\
&+\frac{d\theta h}{2}\norm{\partial_x v_h}{L^2(\Sigma_T)}^2 +\delta\sum_{F_{ij}\in \mathcal{F}_I} \norm{\llbracket \partial_t v_h\rrbracket_x}{L^2(F_{ij})}^2.
\end{align*} 
Next, regrouping every term and applying  the equality \eqref{v_up-v^2} we reach
\begin{align*}
a_h(v_h,v_h)\,\geq\, &\frac{1+\rho\theta h}{2}\norm{v_h}{L^2(\Sigma_T)}^2+\theta h \norm{\partial_t v_h}{L^2(Q)}^2+d\norm{\partial_x v_h}{L^2(Q)}^2+\rho\norm{v_h}{L^2(Q)}^2\\
&-\frac{d\theta h}{2}\norm{\partial_{x} v_h}{L^2(\Sigma)}^2+\frac{d\theta h}{2}\sum_{F_{ij}\in \mathcal{F}_I} \int_{F_{ij}}\lvert n_{i,t}\rvert \llbracket \partial_x v_h \rrbracket^2 ds\\
&+\frac{d\theta h}{2}\norm{\partial_x v_h}{L^2(\Sigma_T)}^2 +\delta\sum_{F_{ij}\in \mathcal{F}_I} \norm{\llbracket \partial_t v_h\rrbracket_x}{L^2(F_{ij})}^2.
\end{align*}
Since $\lvert n_{i,t}\rvert\geq n_{i,t}^2 $, we get:
\begin{equation*}
a_h(v_h,v_h)\geq \min(\frac{1}{2},d) \norm{v_h}{h}^2-\frac{d\theta h}{2}\norm{\partial_{x} v_h}{L^2(\Sigma)}^2.
\end{equation*} 
 Finally, the inverse inequality (see, for example, \cite[Theorem 4.2]{Evans2013}) provides a constant $\widetilde C>0$ such that
\begin{equation*}
\lvert\lvert  \partial _x  v_h\rvert\rvert_{L^2(\partial K)}\leq \widetilde C h_K^{-\frac 12}\lvert\lvert\partial_xv_h\rvert\rvert_{L^2(K)}, \mbox{ for } K\in \mathcal{K}_h,
\end{equation*}
and using the quasi-uniform mesh assumption \eqref{quasi-uniform estimate}, 
we obtain
\begin{align*}
a_h(v_h,v_h)&\geq \min(\frac{1}{2},d)\norm{v_h}{h}^2-\frac{d\theta \widetilde C^2 C}{2}\norm{\partial_x v_h}{L^2(Q)}^2\\
&\geq \left(\min(\frac{1}{2},d)-\frac{d\theta \widetilde C^2 C}{2}\right)\norm{v_h}{h}^2,
\end{align*}
which, for $\theta>0$ small enough, proves  the $V_{0h}$-coercivity of $a_h$ with respect to the norm $\|\cdot\|_h$.
\end{proof}

\begin{proof}[Proof of  Theorem \ref{th:discret well posedness}]  Apply the Lax-Milgram lemma.
\end{proof}

\subsection{Numerical simulations} \label{ss:numerics}
In this subsection, 
we implement the discretization of problem \eqref{eq} in FreeFem++ (\url{http://www.freefem.org}), a high level, free software package. 
To validate our implementation, we first consider a simplified test case on a fixed domain where $A(t)=0$, $L(t)=L$, and no   selection occurs ($\alpha\equiv 0$). In this configuration, the solution to \eqref{eq} with initial condition  $u_0(x)=\sin \left(\frac{\pi x}{L}\right)$ is  explicitly given by 
\begin{equation*}
u(t,x)=\sin\left(\frac{\pi x}{L}\right)e^{(r-\frac{d\pi^2}{L^2})t}.
\end{equation*}
This exact solution serves as a benchmark to verify the accuracy of our numerical scheme, resumed in Figure \ref{fig: error}.

\begin{figure}[h!]
\centering
\includegraphics[scale=0.3]{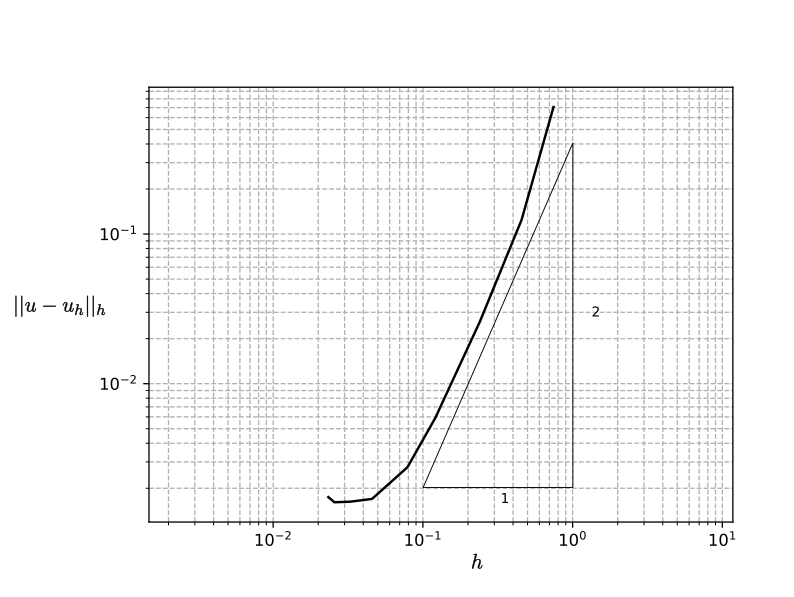}      
\caption{\textbf{Evolution of the consistency error ($\norm{u-u_h}{h}$) with respect to the mesh size $h$.} The parameters are as set as the following, $r=6$, $L=1.5$, $\theta=10^{-5}$ and $\delta=10^{-6}$.
  }
\label{fig: error}
  \end{figure} 
We now present several  numerical tests  with parameters 
$\theta =10^{-5}$, $\delta =10^{-6}$,  and initial data $u_0(x)=\sin(\frac{\pi x}{L})$, to highlight some underlying properties of the model.\par 
To begin,  we simulate the evolution of the solution to \eqref{eq-fix} to gain new insights into the survival dynamics for a fixed domain with a seasonal optimum position.

\begin{example}[Seasonal optimum's position]\label{ex:seasonal} Here we consider the test case $d=10$, $r=17.5$, $\alpha(t)=5$, $\beta(t)=\frac{1}{2}+\frac{1}{2}\sin(2t)$, $L=3$, which, as easily checked, meets the  sufficient conditions of survival as stated in Corollary \ref{cor:sur}. The outcomes are presented in Figure \ref{fig:seasonal}. 
\begin{figure}[h!]
\centering
\includegraphics[scale=0.4]{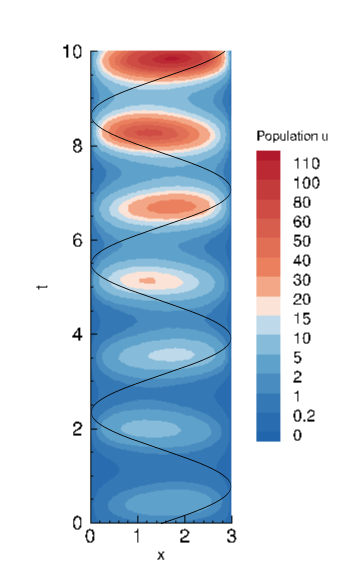}      
\caption{\textbf{Evolution of the solution with a time-periodic optimum in a seasonal model.} The parameters are as described in Example \ref{ex:seasonal}. In solid black lines, the evolution of the position of the optimum $x_{opt}(t)=\beta (t)L$.
  }
\label{fig:seasonal}
  \end{figure}
  As expected, survival takes place. Furthermore, the simulation showcases subtle interaction between the time periodic position of the optimum and the behavior (local growth/decay) of the population. Indeed, while the optimum position periodically describes the whole interval $(0,L)$, the position of the maximum of the solution oscillates on shorter intervals avoiding the boundaries (recall the zero Dirichlet boundary conditions) and decreasing as time passes. In other words, we observe damped oscillations of the position of the maximum. Interestingly, we also observe that $t\mapsto \Vert u(t,\cdot)\Vert_{L^\infty}$ is non monotonic. We refer to the pattern in Figure \ref{fig:seasonal}. 
\end{example}
\begin{example}[Sub-linear shift]\label{ex:sublinear} 
We  consider   a domain  undergoing a sub-linear shift while maintaining a constant length, namely $A(t)=\sqrt{t}$ and $L(t)=2$. The remaining  parameters are set as follows:  $T=10$, $d=1$, $r=4.1$, $\alpha(t)=7+0.2\sin(t)$, and either $\beta_l(t)=0.2-0.1\sin(4t)$ (left panel) or $\beta_r(t)=0.8+0.1\sin(4t)$ (right panel).  The results are presented  in Figure \ref{fig:sqrt}. This simulation illustrates that survival is not affected by a constant-length domain  that shifts sub-linearly, see Corollary \ref{cor:sublinear-shift}. Let us also note that the position of the maximum of $x\mapsto u(t,x)$ does not coincide with the optimal  position $x_{opt} (t)$. Furthermore, the habitat shifting towards right, we observe that, even if survival occurs in both cases, the optimum leaning on the left, via $\beta_l(t)$, is more favorable to the population than the optimum leaning on the right, via $\beta_r(t)$. 
\begin{figure}[h!]
\centering
\includegraphics[scale=0.6, height=5cm]{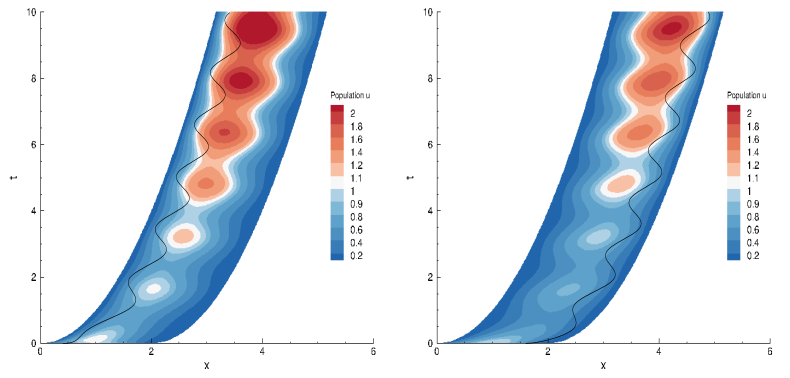}       
\caption{
\textbf{Effect of a sub-linear domain shift and of the position of the optimum.} The parameters are as described in Example \ref{ex:sublinear}. In solid black lines, the evolution of the position of the optimum $x_{opt}(t)=A(t)+\beta (t)L$.
}
\label{fig:sqrt}
\end{figure}
\end{example}

\begin{example}[Super-linear shift]\label{ex:superlinear} 
 We  consider a domain that shifts super-linearly   while maintaining  a constant length,  namely  $A(t)=t^{1.35}$ and $L(t)=35$. The parameters are as follows:  $T=30$, $d=7$, $r=1.25$, $\alpha (t)=0.4+0.04\sin(t)$, $\beta(t)=0.5+0.1\sin(t)$.  The results  are presented in Figure \ref{fig:superlinear}. We observe extinction of the population as explained by Corollary \ref{cor:superlinear-shift}: the domain translates \lq\lq too rapidly'' for the population. However, the simulation provides insight into the transient dynamics: the population  takes advantage of the periodic return of the optimum in its neighborhood to perform a small rebound (which  is not enough to survive at large times). This  contrasts with the above sub-linear case, where the domain shift merely relocates the solution maximum without causing extinction. 
\begin{figure}[h!]
   \centering
\includegraphics[scale=3, height=7cm]{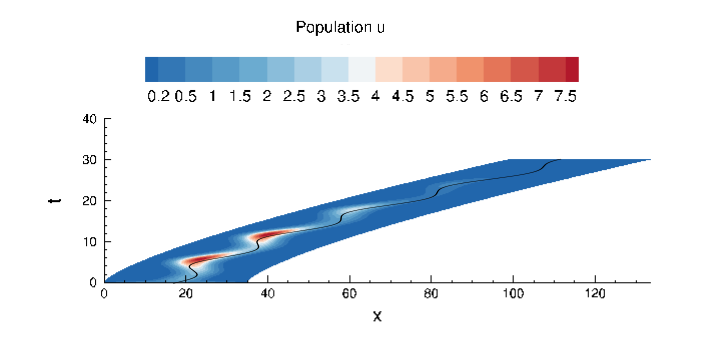 }  
\caption{\textbf{Population extinction due to a super-linear domain shift.} The parameters are as described in Example \ref{ex:superlinear}.  In solid black lines, the evolution of the position of the optimum $x_{opt}(t)=A(t)+\beta (t)L$.
}
\label{fig:superlinear}
\end{figure}
\end{example}
\begin{example}[Linear shift]\label{ex:linear} 
We consider  a domain that  shifts linearly  while  maintaining a constant length, namely $A(t)=2t$ and $L(t)=3$. The parameters are set as follows:  $d=1.6$, $T=4$, $r=4.8$, $\alpha (t)=5+0.1\sin(4t)$.  The results  are presented in  Figure \ref{fig:lin}. Let us recall that, from Corollary \ref{cor:linear-shift}, the comparison between $c=2$ and $c^*=2\sqrt{-\lambda d}$ decides between survival or extinction, where $\lambda$ obviously depends on the optimum position and thus on $\beta(t)$. To examine  this,  we consider three different cases for $\beta(t)$, namely   $\beta_l(t)=0.1-0.1\sin(t)$ (left panel) for which $\langle \beta_l \rangle=0.1$, $\beta_r(t)=0.9+0.1\sin(t)$ (right panel) for which $\langle \beta_r \rangle=0.9$, and $\beta_m(t)=0.5+0.1\sin(t)$ (middle panel) for which $\langle \beta_m \rangle=0.5$. We observe that survival occurs only in the \lq\lq middle'' case, meaning that $c_{l}^*=c_r^*<2<c_m^*$, or equivalently that $\lambda_m<\lambda_l=\lambda_r$, with obvious notations. Indeed, because of the Dirichlet boundary conditions, the optimum \lq\lq in the middle'' enhances the chances of survival. Note also that, because of the shift of the domain, the left and right cases are not symmetric any longer:   in the later case, the population is kept alive for a slightly longer period of time.
\begin{figure}[h!]
   \centering
\includegraphics[scale=0.25,height=4cm]{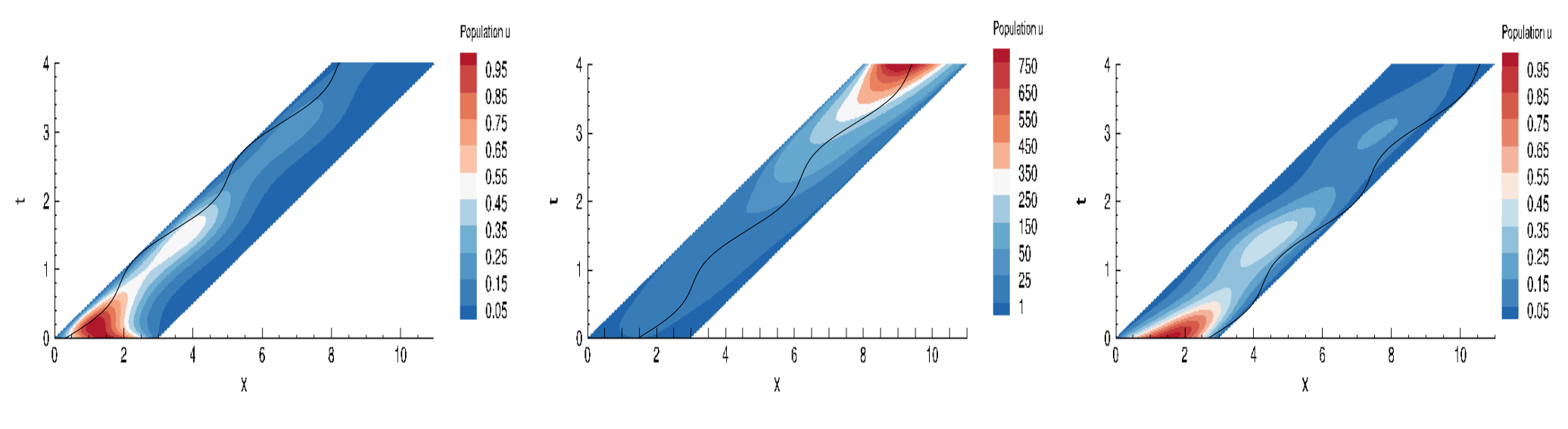}
\caption{\textbf{Effect of a linear domain shift and of the position of the optimum.} The parameters are as described in Example \ref{ex:linear}.  In solid black lines, the evolution of the position of the optimum $x_{opt}(t)=A(t)+\beta (t)L$.
}
\label{fig:lin}
\end{figure}
\end{example}
\begin{example}[Linear enlargement]\label{ex:enlarge}
 We  consider a domain that does not shift but gradually enlarges over time, namely  $A(t)=0$ and  either  $L(t)=1+0.3t$  (left panel of Figure \ref{fig:enlarge} with $T= 11$), or $L(t)=1+2t$ (right panel of Figure \ref{fig:enlarge} wit $T=15$). Other parameters are set as follows: $d=1$, $r=3.5$, $\alpha (t)=3+0.2\sin(2t)$, and $\beta(t)=0.8+0.1\sin(2t)$. The results are presented in Figure \ref{fig:enlarge}. We observe that the population survives if enlargement is slow (left panel) but goes to extinction if it is too fast (right panel). This may sound slightly counter-intuitive since one may expect  larger domains to be more fitted for survival. However, extinction is here likely caused by the optimal position shifting \lq\lq too fast'' to the right.
\begin{figure}[h!]
\centering
\includegraphics[scale=0.7,height=4.5cm]{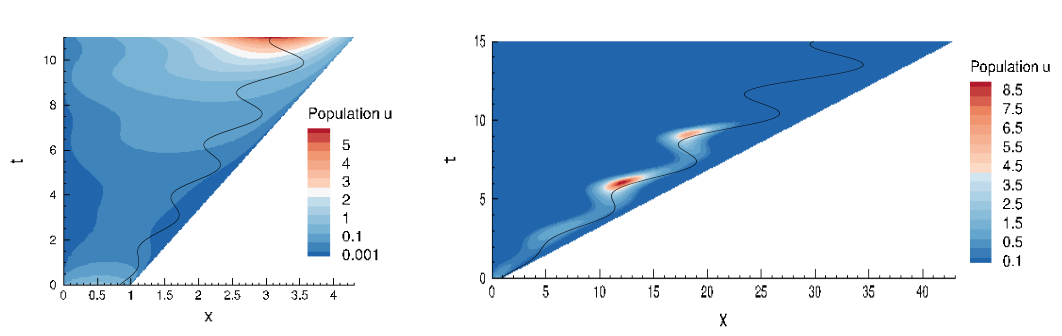}  
\caption{\textbf{Effect of the speed of enlargement.} 
The parameters are as described in Example \ref{ex:enlarge}.  In solid black lines, the evolution of the position of the optimum $x_{opt}(t)=\beta (t)L(t)$.
}
\label{fig:enlarge}
\end{figure}
\end{example}
\appendix
\section{ Appendix: Periodic parabolic eigenproblems}\label{s:appendix}
The following result on periodic parabolic eigenelements is borrowed from \cite[Theorem 1]{Cas-Laz-82}.
\begin{theorem}[Periodic parabolic eigenelements]\label{th:Cas-Laz-82} 
Let $\Omega \subset \R^n$ be an open bounded domain with boundary of class $\mathcal{C}^{2+\nu}$ for some $\nu \in (0,1) $. Let $L$ be the differential operator
\begin{equation*}
Lu:=u_t-\sum^{n}_{i,j=1}a_{ij}(t,x)u_{x_i,x_j}-\sum^{n}_{i=1}b_{i}(t,x)u_{x_i}-c(t,x)u
\end{equation*}
where all the coefficients $a_{ij}$, $b_{i}$, $c$ belong to $\mathcal{C}^{\frac \nu 2,\nu}(\R\times \overline{\Omega})$ and are $T$-periodic in time. Assume that $L$ is uniformly parabolic, namely there is $m>0$ such that
$$
\forall (t,x)\in \R \times \overline \Omega,\, \forall (\xi_1,\cdots,\xi_n)\in \R^n,\,  \sum_{i,j=1}^n a_{ij}(t,x)\xi_i\xi_j\geq m \sum_{i=1}^n \xi_i^2.
$$ 

Then there is a unique  $\lambda\in \R$ (the principal eigenvalue) such that there is a unique (up to multiplication by a positive constant) function $\varphi \in \mathcal{C}^{1+\frac \nu 2,2+\nu}(\R\times \overline \Omega)$ (the principal eigenfunction) solving
\begin{equation*}
\begin{cases}
L\varphi=\lambda \varphi   & \quad t\in \R, \, x\in \Omega,\vspace{5pt}\\
\varphi(t,x)=0, &\quad t\in \R,\, x\in \partial \Omega, \vspace{5pt}\\
\varphi>0 & \quad  t\in \R, \, x\in\Omega, \vspace{5pt} \\
\varphi(t,x)=\varphi(t+T,x) & \quad  t\in \R, \, x\in \Omega.
\end{cases}
\end{equation*}
Furthermore, if $n$ denotes the unit outer normal to $\partial \Omega$ at $x$ then 
\begin{equation}
\label{Hopf}
\frac{\partial \varphi}{\partial n}(t,x) <0, \quad \text{ for all } t\in \R,\, x\in \partial \Omega.
\end{equation}
\end{theorem}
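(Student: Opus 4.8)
The plan is to reduce the periodic-parabolic eigenproblem to a genuine eigenvalue problem for the period (Poincaré) map and then invoke the Krein--Rutman theorem. Writing $\mathcal{L}_0(t)u:=\sum_{i,j}a_{ij}u_{x_ix_j}+\sum_i b_i u_{x_i}+cu$ so that $Lu=u_t-\mathcal{L}_0(t)u$, the key observation is the substitution $u(t,x)=e^{-\lambda t}\varphi(t,x)$: a $T$-periodic $\varphi$ solves $L\varphi=\lambda\varphi$ if and only if $u$ solves the homogeneous Dirichlet problem $u_t=\mathcal{L}_0(t)u$ on $\R\times\Omega$ subject to $u(T,\cdot)=e^{-\lambda T}u(0,\cdot)$. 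Thus, if $P$ denotes the solution operator over one period of $u_t=\mathcal{L}_0(t)u$ with homogeneous Dirichlet data, finding the principal eigenpair is equivalent to finding a positive $u_0$ with $P u_0=e^{-\lambda T}u_0$, and the principal eigenvalue is recovered as $\lambda=-\tfrac1T\log\rho(P)$, where $\rho(P)$ is the spectral radius of $P$.

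First I would set up the functional framework on the ordered Banach space $E:=\{u\in C^1(\overline\Omega):u=0\text{ on }\partial\Omega\}$, whose positive cone $K$ has nonempty interior consisting precisely of those $u$ with $u>0$ in $\Omega$ and $\partial_n u<0$ on $\partial\Omega$. By classical linear parabolic theory (using the Hölder and periodicity hypotheses on the coefficients), the initial-boundary value problem for $u_t=\mathcal{L}_0(t)u$ is well posed and $P:E\to E$ is well defined; after possibly replacing $c$ by $c-\kappa$ for a large constant $\kappa$ (which only shifts $\lambda$), I may assume the associated solutions obey a clean comparison principle. The two structural facts to establish are that $P$ is \emph{compact}, which follows from the parabolic smoothing effect together with Schauder estimates and Arzel\`a--Ascoli, and that $P$ is \emph{strongly positive}, i.e. $P(K\setminus\{0\})\subset \operatorname{int}K$: this is exactly where the strong parabolic maximum principle (yielding $u>0$ in $\Omega$ at time $T$) and the parabolic Hopf boundary point lemma (yielding $\partial_n(Pu_0)<0$) enter.

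Granting compactness and strong positivity, the strong form of the Krein--Rutman theorem yields a spectral radius $\rho(P)>0$ that is a \emph{simple} eigenvalue with a positive eigenvector $u_0\in\operatorname{int}K$, and moreover $\rho(P)$ is the only eigenvalue possessing a positive eigenvector. Translating back via $u(t,x)=e^{-\lambda t}\varphi(t,x)$ with $\lambda=-\tfrac1T\log\rho(P)$, I would solve the forward problem from $u_0$ and set $\varphi(t,\cdot):=e^{\lambda t}u(t,\cdot)$; the eigenrelation $Pu_0=\rho(P)u_0$ guarantees $\varphi(t+T,\cdot)=\varphi(t,\cdot)$, so $\varphi$ is the sought $T$-periodic positive eigenfunction, and the simplicity of $\rho(P)$ gives uniqueness of $\lambda$ and of $\varphi$ up to a positive multiplicative constant. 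The claimed regularity $\varphi\in\mathcal{C}^{1+\nu/2,2+\nu}(\R\times\overline\Omega)$ then follows from interior and boundary parabolic Schauder estimates, using the $\mathcal{C}^{\nu/2,\nu}$ regularity of the coefficients and the $\mathcal{C}^{2+\nu}$ regularity of $\partial\Omega$, bootstrapped with periodicity. Finally, the Hopf inequality \eqref{Hopf} is an immediate consequence of the parabolic Hopf lemma applied to the strictly positive $T$-periodic solution $\varphi$ at each boundary point.

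The main obstacle I anticipate is the careful verification of strong positivity of the period map in the chosen $C^1$-based space, namely ensuring that $P$ lands in the interior of the cone, which requires combining the strong maximum principle with the boundary point lemma and some care about the time over which positivity propagates. A secondary technical point is making the identification $\rho(P)=e^{-\lambda T}$ fully rigorous, i.e. checking that the eigenvector produced by Krein--Rutman indeed generates a genuinely $T$-periodic $\varphi$ and that no admissible real eigenvalue of the periodic-parabolic problem is missed; both are handled by the one-to-one correspondence above together with the uniqueness clause of Krein--Rutman.
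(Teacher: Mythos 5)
The paper does not prove this theorem at all: it is quoted verbatim from the cited reference \cite{Cas-Laz-82} (see the Appendix, ``borrowed from [Theorem 1]''), so there is no in-paper argument to compare against. Your reconstruction is the classical proof of this result as found in the literature (Castro--Lazer, and Hess's monograph): reduce to the period map $P$ on $C_0^1(\overline\Omega)$, establish compactness via parabolic smoothing and Schauder estimates, establish strong positivity via the strong maximum principle and the parabolic Hopf lemma, apply the strong Krein--Rutman theorem, and translate $\rho(P)=e^{-\lambda T}$ back into the periodic eigenpair. The sketch is sound, and you correctly identify the two points where care is needed: that $P$ maps $K\setminus\{0\}$ into the interior of the cone (this is what makes the strong form of Krein--Rutman applicable and yields both simplicity and the Hopf inequality \eqref{Hopf}), and that the uniqueness clause of Krein--Rutman (the spectral radius is the \emph{only} eigenvalue with a positive eigenvector) is exactly what rules out any other admissible $\lambda$. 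The only cosmetic remark is that the exponential shift $c\mapsto c-\kappa$ is not strictly needed for the comparison principle in the linear setting, but it is harmless since it merely translates $\lambda$.
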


\medskip

\noindent{\bf Acknowledgement.}  Matthieu Alfaro is grateful to the team RAPSODI (INRIA Lille) for its hospitality and nice atmosphere for work. Matthieu Alfaro is supported by  the  ANR project DEEV ANR-20-CE40-0011-01. The second author would like to thank Ch\'erif Amrouche for insightful and fruitful discussions on trace theorems.

\bibliographystyle{siam}  

\bibliography{biblio}

\end{document}